\theoremstyle{plain}
 \newtheorem{theorem}{Theorem}[section]
\theoremstyle{definition}
 \newtheorem{definition}[theorem]{Definition}
\theoremstyle{plain}
 \newtheorem{proposition}[theorem]{Proposition}
 \newtheorem{corollary}[theorem]{Corollary}
 \newtheorem{lemma}[theorem]{Lemma} 
\theoremstyle{remark}
\def\Z{\mathbb{Z}}
\def\N{\mathbb{N}}
\newcommand{\K}{\mathcal K}
\newcommand{\rank}{{\rm rank}}
\definecolor{m}{rgb}{1,0.1,1}
\title{Isomorphism versus commensurability for a class of finitely presented groups}
\subjclass[2000]{{Primary 20F10; Secondary 20F69, 03D40, 20F65}}
\date{\today}
\keywords{Commensurable groups, virtual isomorphism problem, groups without finite quotients.}
\author{Goulnara Arzhantseva}
\address{University of Vienna,
Faculty of Mathematics, Nordbergstra{\rm \ss}e 15, 1090 Vienna,
Austria} \email{goulnara.arzhantseva@univie.ac.at}
\author{Jean-Fran\c{c}ois Lafont}
\address{The Ohio State University, Department of Mathematics,
 100 Math Tower, 231 West 18th Avenue,
Columbus, OH 43210-1174, USA} \email{jlafont@math.ohio-state.edu }
\author{Ashot Minasyan}
\address{School of Mathematics, University of Southampton, Highfield campus,
Southampton, SO17 1BJ, United Kingdom} \email{aminasyan@gmail.com}
\begin{document}

\begin{abstract}
 We construct a class of finitely presented groups where the
  isomorphism problem is solvable but the commensurability
  problem is unsolvable. Conversely,
  we construct a class of finitely presented
  groups within which the commensurability problem is solvable but the isomorphism problem is unsolvable.
  These are first examples of such a contrastive
  complexity behavior with respect to the isomorphism problem.
\end{abstract}

\maketitle

\pagestyle{myheadings}
\markright{Isomorphism versus commensurability for a class of finitely presented groups}


\section{Introduction}

The purpose of this paper is to study the relative algorithm
complexities of the following two major group theoretical decision
problems: the isomorphism problem and the commensurability problem.

Both of these problems have a long history~\cite{Dehn,siegel}, a
meaningful topological interpretation~\cite{stillwell,borel}, and a
number of famous solutions for specific classes of
groups~\cite{DM,margulis, miller}. It is also well known that these problems are undecidable within the class of all finitely presented groups.
However their comparison from
the algorithmic point of view  seems not to have been done up to
now. Moreover, there have been numerous results comparing decision problems
dealing with elements in a single group, such as the word problem,
conjugacy problem, power problem, etc. (see, for
instance,~\cite{lipschutz-miller,miller}). In contrast, there have
so far been no comparative results involving the isomorphism
problem. We remedy this situation, by establishing the following two
complementary theorems:

\begin{theorem}\label{thm:main2}
There exists a recursively enumerable class $\mathcal C_1$ of finite
presentations of groups, with uniformly solvable word problem, such
that the isomorphism problem is solvable but the commensurability
problem is unsolvable within this class.
\end{theorem}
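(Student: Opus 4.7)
The plan is to exhibit a computable enumeration $n \mapsto P_n$ of finite presentations in which each group $G_n = \langle P_n \rangle$ carries a canonically recoverable tag $\tau(G_n) = n$. The isomorphism problem in $\mathcal{C}_1$ then reduces to comparing two integers (decidable), while the commensurability classes among the $G_n$ are engineered to realize a recursively enumerable but non-recursive equivalence relation $\sim$ on $\N$, making the commensurability problem undecidable.

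\medskip

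First I would fix a finitely presented group $H$ with unsolvable word problem (Boone--Novikov) and extract an r.e.\ but non-recursive equivalence relation $\sim$ on $\N$: enumerate words $(w_n)_{n \in \N}$ in the generators of $H$ and set $n \sim m$ iff $w_n =_H w_m$. The target of the construction is then a recursive family $\{G_n\}_{n \in \N}$ whose commensurability classes are precisely the $\sim$-classes and whose isomorphism classes are singletons.

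\medskip

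For the construction itself I would build $G_n$ as a product $G_n = B_n \ltimes T_n$, where $B_n$ is a \emph{base piece} whose commensurability class is $[n]_{\sim}$, and $T_n$ is a \emph{tag piece} whose sole role is to force $G_n \not\cong G_m$ for $n \neq m$ while remaining invisible under commensurability. A natural choice for $T_n$ is a cyclic factor of prime order $p_n$, with $(p_n)_{n \in \N}$ an injective recursive list of primes; then $p_n$ can be recovered algorithmically from $G_n$ (for instance as a prime of controlled form appearing in the torsion of $G_n$) and, being finite, $T_n$ disappears upon passage to any finite-index subgroup. Uniform solvability of the word problem in $\mathcal{C}_1$ then reduces to uniform solvability for the base family $\{B_n\}$, which is achievable provided the $B_n$ arise from a single uniform scheme applied to $n$ (so that one algorithm handles every $G_n$).

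\medskip

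The main obstacle, and the technical core of the proof, is building the base family $\{B_n\}$ so that (i) $\{B_n\}$ has uniformly solvable word problem, (ii) $B_n$ and $B_m$ are commensurable iff $n \sim m$, and (iii) the commensurability classification survives the decoration by $T_n$. This requires simultaneously a \emph{flexibility} statement (so that the prescribed r.e.\ relation $\sim$ is actually realized as commensurability classes) and a \emph{rigidity} statement (so that $n \not\sim m$ yields genuinely non-commensurable $B_n, B_m$). A plausible route is to fix a single f.p.\ group $B$ with a rich but controllable lattice of finite-index subgroups and then use a Higman-style encoding of the word $w_n$ to produce $B_n$ as a controlled modification of $B$; commensurability invariants of $B_n$ must then be shown to detect exactly the class $[w_n]_H$. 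Establishing (ii) and (iii) together is the delicate part, since commensurability is a priori much coarser than isomorphism and one must ensure that this extra coarseness coincides with $\sim$ and nothing more.
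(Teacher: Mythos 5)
Your high-level architecture---a ``tag'' factor that pins down the isomorphism type while vanishing under passage to finite-index subgroups, sitting on a ``base'' family whose commensurability classes encode an undecidable problem---matches the paper's construction in spirit (the paper uses $K_r = G_{\tau_{d_r}} \times C_{r+1}$, with the finite cyclic factor detected by the abelianization and invisible to commensurability). But the proposal stops exactly where the real content begins, and the two mechanisms you leave unspecified are the ones that make or break the theorem. First, you give no mechanism by which commensurability---a statement about finite-index subgroups---can be made to encode a computation at all. The paper's key lemma is that for a mapping torus $G_\varphi = G \rtimes_\varphi \Z$ of a group $G$ with no proper finite-index subgroups, \emph{every} finite-index subgroup is of the form $G \rtimes_{\varphi^k}\Z$; consequently $G_{\tau_a}$ is commensurable with $G\times\Z$ if and only if $a$ has finite order. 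Without some such rigidity statement your ``flexibility plus rigidity'' requirement on the base family is a wish, not a construction, and commensurability being ``a priori much coarser than isomorphism'' is precisely the obstruction you acknowledge but do not overcome.

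Second, your choice of undecidable ingredient is in tension with the uniform word problem requirement. You propose to extract the relation $\sim$ from the word problem of a Boone--Novikov group $H$ and then encode $w_n$ into $B_n$ ``Higman-style''; but any construction that lets commensurability of $B_n$ and $B_m$ detect whether $w_n =_H w_m$ risks letting the word problem in $B_n$ detect it too, destroying uniform solvability. The paper sidesteps this by using the \emph{torsion} problem rather than the word problem as the undecidable seed: P.~Hall's center-by-metabelian group, quotiented by $\langle d_{f(n)}^n \mid n\in\N\rangle$ for a recursive $f$ with non-recursive range, has solvable word problem, yet whether a given central element $d_r$ has finite order is equivalent to $r \in f(\N)$ and hence undecidable. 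That finite-order condition is exactly what the mapping-torus rigidity converts into commensurability with the fixed group $K_0$. Note also that the paper only needs undecidability of commensurability with a \emph{single} fixed group, which is much less than your plan to realize an arbitrary r.e.\ non-recursive equivalence relation as the full commensurability relation; aiming for the latter adds difficulty without benefit.
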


\begin{theorem}\label{thm:main1}
There exists a recursively enumerable class $\mathcal C_2$ of finite
presentations  of groups such that the commensurability problem is
solvable but the isomorphism problem is unsolvable within this
class.
\end{theorem}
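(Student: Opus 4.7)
The plan is to arrange the class $\mathcal{C}_2$ so that every presentation in it defines a group commensurable with a single fixed finitely presented group $L$. Once this is achieved, the commensurability problem is trivially decidable within $\mathcal{C}_2$---the algorithm always outputs \emph{yes}---and the entire content of the theorem lies in arranging that the isomorphism problem \emph{inside} this single commensurability class is undecidable.

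A natural framework is to produce a recursively enumerable family $\{G_w\}_{w\in W}$ of finite presentations, uniformly computable from $w$, each presenting a finite-index overgroup of $L$ of the form $G_w = L \rtimes_{\phi_w} F$ for some fixed finite group $F$ and a uniformly recursive family of actions $\phi_w \colon F \to \mathrm{Out}(L)$. The finite-index inclusion $L \hookrightarrow G_w$ is then explicit from the presentation, so that commensurability inside the class is manifest and requires no algorithm beyond outputting ``yes''.

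To inject undecidability into the isomorphism problem, I would arrange $L$ so as to be characteristic inside every $G_w$---for example by choosing $L$ perfect or forcing $L$ to have no proper subgroup of index dividing $|F|$---so that any isomorphism $G_w \cong G_{w'}$ restricts to an automorphism of $L$ and thereby induces an equivalence between $\phi_w$ and $\phi_{w'}$ in $\mathrm{Out}(L)$. Then, using a Bumagin--Wise-style realisation to construct an $L$ whose outer automorphism group contains a finitely presented subgroup with undecidable conjugacy problem, one can tune the family $\phi_w$ to encode an arbitrary instance of this undecidable problem, so that deciding isomorphism within $\mathcal{C}_2$ would decide the encoded conjugacy problem.

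The main technical obstacle is the simultaneous control of all these ingredients: $L$ must be finitely presented, characteristic in the semidirect products $L \rtimes_{\phi_w} F$, and carry an $\mathrm{Out}(L)$ sufficiently rich to carry the undecidability reduction; and the map $w \mapsto G_w$ must remain uniformly computable with the finite-index copy of $L$ visible in the output. The most delicate point is ruling out ``accidental'' isomorphisms $G_w \cong G_{w'}$ that do not come from conjugation in $\mathrm{Out}(L)$, as these would short-circuit the reduction from the conjugacy problem in $\mathrm{Out}(L)$ to the isomorphism problem in $\mathcal{C}_2$. Verifying the characteristicity of $L$ inside each $G_w$, and that an isomorphism $G_w \cong G_{w'}$ restricted to $L$ faithfully records the $\mathrm{Out}(L)$-equivalence of $\phi_w$ and $\phi_{w'}$, is the core step where care is needed.
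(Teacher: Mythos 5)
Your overall architecture is the same as the paper's: make every group in $\mathcal{C}_2$ a finite-index extension tied to one fixed group (so that commensurability is trivially ``yes'') and encode an undecidable problem into the isomorphism type via the outer action. But the proposal leaves exactly the hard steps unconstructed, and two of your specific choices create obstacles that the actual proof is designed to avoid. First, a Bumagin--Wise-style realisation produces a finitely \emph{generated} group with prescribed outer automorphism group, not a finitely presented one, so your $L$ (and hence the $G_w$) would not admit finite presentations as the theorem requires. Moreover, even granting a finitely presented $L$, a homomorphism $\phi_w\colon F \to \mathrm{Out}(L)$ with $F$ finite does not determine a semidirect product: you need a lift to $\mathrm{Aut}(L)$, which can fail to exist, and different lifts can yield non-isomorphic extensions, so the isomorphism type is not controlled by the $\mathrm{Out}(L)$-data alone. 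The paper takes $F=\mathbb{Z}$ precisely so that every outer class yields a well-defined mapping torus and the isomorphism type depends only on the conjugacy class of $\overline{\varphi}^{\pm 1}$ in $\mathrm{Out}(L)$ (Proposition \ref{prop:simple1}).

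Second, and more fundamentally: because $F$ is finite, each $\phi_w$ has finite image, so your reduction needs an undecidable conjugacy (or triviality) problem among \emph{finite-order} elements of $\mathrm{Out}(L)$, uniformly indexed by a recursively enumerable family. A finitely presented subgroup of $\mathrm{Out}(L)$ with undecidable conjugacy problem does not supply this, since the witnesses of that undecidability need not have finite order. Producing torsion outer automorphisms whose triviality is undecidable is the actual crux, and the paper does it concretely (Proposition \ref{prop:second_constr}): Hall's centre-by-metabelian group has central elements $d_1,d_2,\dots$; quotienting by $\langle d_n^2, d_{f(n)} \mid n \in \mathbb{N}\rangle$ for a computable $f$ with non-recursive range makes every $d_r$ of order at most $2$ while making ``$d_r=1$'' undecidable; embedding the result into a centreless finitely presented NFQ group $A_2$ and forming $G=A_2*B$, the partial conjugation $\tau_{d_r}$ has finite order in $\mathrm{Out}(G)$ and is inner if and only if $d_r=1$ (Lemma \ref{lem:tau-inner} and Corollary \ref{cor:main_aux}). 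Your plan would need an analogous construction, and until you supply one the argument is incomplete. (Your instinct to force $L$ to be characteristic in each $G_w$ is sound and corresponds to the paper's use of NFQ groups, which admit no surjections onto $\mathbb{Z}$ or onto nontrivial finite groups.)
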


These results are all the more unexpected as Thomas \cite[Thm. 1.1]{thomas1} showed that the isomorphism and commensurability problems
have the same complexity  from the viewpoint of descriptive set theory.

Let us now explain the terminology and the meaning of our main theorems.

A class $\mathcal C$ of finite presentations of groups has
\textit{uniformly solvable word problem} if there is an algorithm
which takes as input a presentation $P\in \mathcal{C}$ and a word in
the generators of this presentation, and decides whether or not this
word represents the identity element of the group given by $P$.

Two groups $G_1,\ G_2$ are \emph{commensurable}  if there exist two
subgroups of finite index $H_i\leqslant G_i$ for $i=1,2$, such that
$H_1$ and $H_2$ are isomorphic. It is not difficult to see that
commensurability is an equivalence relation.

Given a class $\mathcal C$ of finite presentations of groups, we say that the \textit{isomorphism problem is solvable within $\mathcal C$}
[\textit{commensurability problem is solvable within $\mathcal C$}] if there is an algorithm, taking on input two group presentations from $\mathcal C$ and deciding whether or not these
presentations define isomorphic [commensurable] groups.

Often, when considering the isomorphism problem, one is
looking at a certain class $\mathcal G$ of finitely presented {\it groups}. This actually means the class of
all finite presentations of groups from $\mathcal G$. At first glance, it might seem that our Theorems
\ref{thm:main2} and \ref{thm:main1} are somewhat more restrictive, as we are only picking out some
specific family of presentations. Let us clarify this issue.

 Let $\mathcal G_1$ denote the collection of all groups
defined via the presentations in the class $\mathcal C_1$ appearing in Theorem \ref{thm:main2}, and
let $\widehat {\mathcal C}_1$ denote the class of {\it all} finite presentations of groups from $\mathcal G_1$
(so clearly $\mathcal C_1 \subset \widehat {\mathcal C}_1$). It follows immediately from
Theorem \ref{thm:main2} that the commensurability problem is unsolvable for the class
$\widehat {\mathcal C}_1$ of finite presentations of groups, as it is already unsolvable within the
subclass $\mathcal C_1$. On the other hand, the isomorphism problem is still solvable within the
class $\widehat {\mathcal C}_1$. Indeed, given any presentation $P \in \widehat {\mathcal C}_1$, one can start applying Tietze
transformations to it; simultaneously we can start writing down the finite presentations from $\mathcal{C}_1$, because
the class $\mathcal{C}_1$ is recursively enumerable. At each step we can compare the transformations of $P$, obtained so far, with the presentations from
the class $\mathcal{C}_1$, written down by this step. After finitely many steps we will find a finite presentation $P' \in \mathcal{C}_1$
which defines the same group (up to isomorphism) as $P$ (see \cite[II.2.1]{L-S})).
This easily yields an algorithm that identifies a pair of presentations from $\mathcal C_1$
which define the same groups as the given pair of presentations in $\widehat {\mathcal C}_1$. Taking the
resulting pair of presentations in $\mathcal C_1$, we can then apply the
algorithm for deciding the isomorphism problem within the subclass $\mathcal C_1$.
As such, we  view Theorem~\ref{thm:main2} as a statement about the corresponding class of
groups $\mathcal G_1$.

Similarly, let $\mathcal G_2$ denote the collection of all groups
defined via the presentations in the class $\mathcal C_2$ appearing in Theorem \ref{thm:main1}, and
let $\widehat {\mathcal C}_2$ denote the class of {\it all} finite presentations of groups from $\mathcal G_2$
(so again, we have $\mathcal C_2 \subset \widehat {\mathcal C}_2$). By an argument, identical to the one
in the previous paragraph, we have that the isomorphism problem is unsolvable in the class
$\widehat {\mathcal C}_2$, but the commensurability problem is solvable. This allows us to view
Theorem \ref{thm:main1} as a statement about the corresponding class of
groups $\mathcal G_2$.

 The fact that the isomorphism problem is unsolvable within the class $\mathcal{C}_2$
implies, in particular, that there are infinitely many pairwise non-isomorphic
groups within $\mathcal G_2$. More precisely, the set of representatives of isomorphism classes of groups from $\mathcal{G}_2$ is
not recursively enumerable. This fact is of particular interest because it cannot be seen directly from our construction of the class $\mathcal{C}_2$ below.

The proofs of both theorems rely on a combination of various embedding theorems from
combinatorial and geometric group theory involving finitely presented infinite simple groups and infinite groups with no finite quotients.
The main idea is to start with a single group $G$ and construct a class $\mathcal{K}$, of
mapping tori of $G$,  for which the isomorphism problem is directly
related to the word problem in $G$. Similarly, the commensurability problem in $\mathcal{K}$
will be directly related to the torsion problem in $G$.
Thus the solvability/unsolvability of the word [resp. torsion] problem in $G$ will yield the same
for the isomorphism [resp. commensurability] problem in $\mathcal{K}$.

In Section \ref{sec:memb} we prove that there exist no recursive classes of groups with decidable isomorphism or commensurability problems. This shows that
the statements of Theorems \ref{thm:main2} and \ref{thm:main1} are optimal, as the recursively enumerable classes 
of groups we construct in these theorems cannot be recursive..

Besides isomorphism and commensurability, there are other natural equivalence relations on the class of finitely presented groups such
as virtual isomorphism, bi-Lipschitz equivalence, quasi-isometry, etc.
We discuss the corresponding algorithmic problems in the last section, where we also state some open questions.


\section{Mapping tori of groups without proper finite index subgroups}\label{sec:sdirect}

Let $G$ be a group and $\varphi\in \hbox{Aut}(G)$ be an automorphism
of $G$. Let $G_\varphi:=G\rtimes_{\varphi}\mathbb{Z}$ denote the
associated \textit{mapping torus}. As a set,
$G_\varphi=G\times\mathbb{Z}$ and the group product is defined by
$(g,n)(g',m):=(g\cdot \varphi^n(g'),{n+m}),$ where $\varphi^n$ denotes the
automorphism of $G$ which is the $n$-fold composition of $\varphi$, with the convention that $\varphi^0=id_G$, where
$id_G \in \mbox{Aut}(G)$ the identity automorphism of $G$.

We shall consider the class of groups $\mathcal{K}_{G,\Phi}=\left\{ G_{\varphi} \mid \varphi \in
\Phi\right\}$, where $\Phi$ is some subset of ${\rm Aut}(G)$, and analyze the isomorphism problem within the corresponding class of group presentations.
We denote by $\overline{\varphi}$ the image
of $\varphi$ under the canonical epimorphism
$\hbox{Aut}(G)\twoheadrightarrow
\hbox{Out}(G):=\hbox{Aut}(G)/\hbox{Inn}(G)$ onto the quotient of
$\hbox{Aut}(G)$ by the subgroup consisting of inner automorphisms.

\begin{proposition}\label{prop:simple1}
Suppose that $G$ is a group which has no epimorphisms onto $\Z$, and $\varphi, \psi \in \hbox{\rm Aut}(G)$.
Then the following are equivalent.
\begin{itemize}
  \item[(i)] $G_{\varphi}$ is isomorphic to $G_{\psi};$
  \item[(ii)] $\overline{\varphi}\in \hbox{\rm Out}(G)$ is conjugate to
  one of the two elements $\overline{\psi}, {\overline{\psi}}^{-1}\in \hbox{\rm Out}(G).$
\end{itemize}
\end{proposition}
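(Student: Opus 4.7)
The plan is to treat the two implications separately. For (ii)$\Rightarrow$(i), which does not use the hypothesis on $G$, I would write down an explicit isomorphism. Given $\overline{\varphi} = \overline{\alpha}^{-1} \overline{\psi}^{\varepsilon} \overline{\alpha}$ in $\hbox{Out}(G)$ with $\varepsilon \in \{\pm 1\}$ and $\alpha \in \hbox{Aut}(G)$, there exists $k \in G$ with $\varphi = \hbox{inn}_k \circ \alpha^{-1} \circ \psi^\varepsilon \circ \alpha$. I would define $\theta\colon G_\varphi \to G_\psi$ by $\theta(g,0) := (\alpha(g),0)$ for $g \in G$ and $\theta(e,1) := (\alpha(k),\varepsilon)$, and extend by the group operation. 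A direct semidirect-product computation shows that conjugation by $(\alpha(k),\varepsilon)$ acts on $G \leqslant G_\psi$ as $\hbox{inn}_{\alpha(k)} \circ \psi^\varepsilon$; therefore $\theta$ respects the defining relation $tgt^{-1} = \varphi(g)$ precisely because $\varphi$ has the prescribed form, and $\theta$ is visibly bijective.

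For the converse (i)$\Rightarrow$(ii), the heart of the argument is showing that the subgroup $G \leqslant G_\varphi$ admits an intrinsic characterization preserved by any isomorphism $\theta\colon G_\varphi \to G_\psi$. I claim that $G$ equals the intersection of kernels of all homomorphisms $G_\varphi \to \Z$. On the one hand, any such homomorphism restricts trivially to $G$ by the hypothesis that $G$ has no epimorphism onto $\Z$; on the other hand, the canonical projection $\pi\colon G_\varphi \to G_\varphi/G \cong \Z$ has kernel exactly $G$. Since this intersection description is an isomorphism invariant, and the analogous statement holds in $G_\psi$, we conclude $\theta(G) = G$. Set $\alpha := \theta|_G \in \hbox{Aut}(G)$.

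Next, $\theta$ descends to an automorphism of $G_\varphi/G \cong \Z$, which must be multiplication by some $\varepsilon \in \{\pm 1\}$. Writing $t := (e,1) \in G_\varphi$ and $s := (e,1) \in G_\psi$, we thus have $\theta(t) = (h,\varepsilon)$ for some $h \in G$. Applying $\theta$ to the relation $tgt^{-1} = \varphi(g)$ and using that conjugation by $(h,\varepsilon)$ in $G_\psi$ acts on $G$ as $\hbox{inn}_h \circ \psi^\varepsilon$, I obtain $\alpha \circ \varphi = \hbox{inn}_h \circ \psi^\varepsilon \circ \alpha$ in $\hbox{Aut}(G)$. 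Passing to $\hbox{Out}(G)$ yields $\overline{\varphi} = \overline{\alpha}^{-1} \overline{\psi}^\varepsilon \overline{\alpha}$, proving (ii). The main obstacle is the intrinsic characterization of $G$ inside $G_\varphi$: without the $\Z$-quotient hypothesis this step genuinely fails, and nothing a priori would force an abstract isomorphism to send the canonical copy of $G$ to itself; the rest is bookkeeping in the semidirect product, with the $\pm$-ambiguity simply reflecting the two orientations of $\Z = G_\varphi/G$.
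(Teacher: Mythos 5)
Your proof is correct and follows essentially the same route as the paper: both directions hinge on composing with the projection $G_\varphi\to\Z$ to show any isomorphism must preserve the canonical copy of $G$ (your ``intersection of kernels of maps to $\Z$'' phrasing is just a tidier packaging of the paper's argument applied to $\rho$ and $\rho^{-1}$), and then reading off $\alpha\circ\varphi=c_h\circ\psi^{\pm1}\circ\alpha$ from the defining relation. The explicit isomorphism you build for (ii)$\Rightarrow$(i) is the same one the paper writes down, up to the substitution $g=\alpha(k)$.
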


\begin{proof}
Suppose that $G_{\varphi}$ is isomorphic to $G_{\psi}$ via an isomorphism
$$\rho\colon G\rtimes_{\varphi}\mathbb{Z} \longrightarrow G\rtimes_{\psi}\mathbb{Z}.$$

Let $\tau\colon G\to \mathbb{Z}$ be the homomorphism defined by the composition
$$
G\hookrightarrow G\rtimes_{\varphi}\mathbb{Z}\stackrel{\rho}{\longrightarrow} G\rtimes_{\psi}\mathbb{Z}\twoheadrightarrow \mathbb{Z},
$$
with the natural inclusion and epimorphism maps. It follows that $\tau$ is trivial as by hypothesis we know that $G$ does not map onto $\Z$.
Therefore, the restriction of $\rho$ to $G$ has image entirely contained in $G\leqslant G_{\psi}.$ Applying the same argument to $\rho^{-1}$ and
recalling that $\rho^{-1} \circ \rho = id_{G_\varphi}$, we can conclude that $\rho$ maps the $G$-factor in $G_{\varphi}$
isomorphically onto the $G$-factor in $G_{\psi}.$

On the other hand, a generator $t$ of the $\mathbb{Z}-$factor in $G_{\varphi}=G\rtimes_{\varphi}\mathbb{Z}$ has to map
to a generator under the composition
$$
\langle\,t\,\rangle=\mathbb{Z}\hookrightarrow G\rtimes_{\varphi}\mathbb{Z}\stackrel{\rho}{\longrightarrow}
G\rtimes_{\psi}\mathbb{Z}\twoheadrightarrow \mathbb{Z}.
$$

Indeed,  the composition map
$G_{\varphi}\stackrel{\rho}{\longrightarrow}G_{\psi}\twoheadrightarrow \mathbb{Z}$ is surjective as $\rho$ is an isomorphism.
Since $G\leqslant G_{\varphi}$ is contained in the kernel of this map, the image is
determined by the image of the quotient group $G_{\varphi}/G$. However,
such an image coincides with $\langle\,t\,\rangle$ through the short exact sequence
$\{1\}\to G \hookrightarrow G_{\varphi} {\twoheadrightarrow} \mathbb{Z} \to \{1\}.$
Thus, the surjectivity of $G_{\varphi}\stackrel{\rho}{\longrightarrow}G_{\psi}\twoheadrightarrow \mathbb{Z}$ implies that
$t\in G_{\varphi}$ maps to a generator $s^{\pm 1}$ of the $\mathbb{Z}-$factor
in $G_{\psi}.$

Thus, in terms of splittings, the isomorphism $\rho$
is of the form:
\begin{eqnarray*}
  (x,0) &\stackrel{\rho}{\mapsto}& (\alpha(x), 0) \\
  (e,1) &\stackrel{\rho}{\mapsto}& (g,{\pm 1})
\end{eqnarray*}
for any $x\in G$ and some fixed $\alpha\in \hbox{Aut}(G)$ and $g\in
G$ ($e\in G$ is the identity element).

Let us now focus on the case where $(e,1)\stackrel{\rho}\longmapsto
(g, 1)$. Since the map $\rho$ is assumed to be an isomorphism, it
must preserve the relations of the group $G_\varphi$. Evaluating
$\rho$ on the relation $(e,1)(x,0)(e,1)^{-1}=(\varphi(x),0)$ yields
the required constraint on the automorphisms. Indeed, evaluating the
left hand side, we obtain
$$
\rho\big((e,1)(x,0)(e,1)^{-1} \big)= (g,1)(\alpha(x),0)(\psi^{-1}(g^{-1}),-1) = \big(g \psi(\alpha(x)) g^{-1},0\big),
$$
while evaluating the right hand side, we obtain
$$
\rho\big((\varphi(x),0)\big) = \big(\alpha(\varphi(x)),0)\big).
$$
We deduce that the automorphism $\alpha\in \hbox{Aut}(G)$ and the
element $g\in G$ are related to the given automorphisms $\varphi,
\psi \in \hbox{Aut}(G)$ as follows:
$$
\alpha\circ\varphi = c_g\circ\psi\circ\alpha,
$$
where $c_g \in \hbox{Aut}(G)$ is the inner automorphism defined by $c_g(y)=gyg^{-1}$ for all $y \in G$.

Passing to the outer automorphism group, we see that we have to
have $\overline{\alpha}\circ\overline{\varphi}=\overline{\psi}\circ\overline{\alpha}$,
that is, the classes $\overline{\varphi}$ and $\overline{\psi}$ are conjugate in $\hbox{Out}(G).$

\smallskip
Conversely, if the classes $\overline{\varphi}, \overline{\psi} \in
\hbox{Out}(G)$ are conjugate by some $\overline{\alpha}\in
\hbox{Out}(G)$, then one can find an element $g\in G$ so that
$\alpha\circ\varphi = c_g\circ\psi\circ\alpha.$ It is now immediate
that $G_{\varphi}\cong G_{\psi},$ via the isomorphism map defined by
$(x,0) \mapsto (\alpha(x), 0)$ and  $(e,1) \mapsto (g,1).$

\smallskip

A similar analysis can be done in the case $(e,1)
\stackrel{\rho}{\longmapsto} (g,{-1})$. This yields the relation
$\alpha\circ\varphi = c_g\circ\psi^{-1}\circ\alpha,$ that is, the
classes $\overline{\varphi}$ and $\overline{\psi}^{-1}$ are
conjugate in $\hbox{Out}(G).$ This finishes the proof.
\end{proof}

In order to facilitate the notation let us give the following
\begin{definition} We will say that group $G$ is NFQ (\textit{`No Finite Quotients'}), if the only finite quotient of $G$ is the trivial group.
\end{definition}

Since every finite index subgroup contains a finite index normal subgroup, a group $G$ is NFQ if and only if $G$ has no proper subgroups of finite index.
It is easy to see that any NFQ group $G$ has no epimorphisms onto $\Z$, and thus it satisfies the assumptions of Proposition \ref{prop:simple1}.
Basic examples of NFQ groups are infinite simple groups.

To study the commensurability problem within the class
$\mathcal{K}_{G,\Phi}$, we need to know the structure of subgroups of
finite index in the corresponding mapping tori. The
following observation shows that all such subgroups are ``congruence
subgroups'':

\begin{proposition}\label{prop:simple2}
Let $G$ be a NFQ group, and $\varphi \in \hbox{\rm Aut}(G)$. Let
$\pi: G_\varphi \twoheadrightarrow \mathbb Z$ be the canonical
projection onto the $\mathbb Z-$factor of the mapping torus.
Assume that $H \leqslant G_\varphi$ is a finite index subgroup of
$G_\varphi$. Then $H= \pi ^{-1}\big(k \mathbb Z\big) \cong
G_{\varphi^k}$, where $k$ is the index of $H$ in $G_\varphi$ (and in
particular, $H$ must be normal in $G_\varphi$).
\end{proposition}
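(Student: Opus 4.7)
The plan is to use the NFQ property of $G$ to force the normal subgroup $G \leqslant G_\varphi$ to lie inside $H$; once this is done, the structure of $H$ is dictated entirely by its image under $\pi$.

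First I would look at $\pi(H) \leqslant \Z$. Since every subgroup of $\Z$ is cyclic, we have $\pi(H) = k\Z$ for some $k \geqslant 1$. Next I would consider the intersection $H \cap G$, noting that $H \cap G$ is the kernel of the restriction $\pi|_H$, so $H/(H\cap G) \cong \pi(H) = k\Z$. The key observation is then that $H \cap G$ has finite index in $G$: indeed, the standard injection $G/(H \cap G) \hookrightarrow G_\varphi/H,$ sending $g(H\cap G)$ to $gH$, shows $[G : H \cap G] \leqslant [G_\varphi : H] < \infty$. Because $G$ is NFQ, this forces $H \cap G = G$, i.e.\ $G \leqslant H$.

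From $G \leqslant H$ and $\pi(H) = k\Z$ it follows at once that $H = \pi^{-1}(k\Z)$, since $\pi^{-1}(k\Z) = G \cdot \pi^{-1}(k\Z \cap \text{something})$; more concretely, any element of $\pi^{-1}(k\Z)$ can be written as $(g,0)(e,mk)$ with $g \in G$ and $m \in \Z$, and both factors lie in $H$. Counting cosets then gives $[G_\varphi : H] = [\Z : k\Z] = k$, as required. Normality of $H$ in $G_\varphi$ is immediate, either because $\pi^{-1}(k\Z)$ is the preimage of a normal subgroup under a homomorphism, or because $G \trianglelefteq G_\varphi$ and $k\Z \trianglelefteq \Z$.

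Finally I would identify $H$ explicitly with $G_{\varphi^k}$. The element $t^k := (e,k) \in H$ acts by conjugation on $G \leqslant H$ as $\varphi^k$, so $H = G \rtimes_{\varphi^k} \langle t^k \rangle \cong G \rtimes_{\varphi^k} \Z = G_{\varphi^k}$.

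The only real obstacle is the step that establishes $G \leqslant H$: everything else is a routine unpacking of the semidirect-product structure. That step reduces cleanly to the fact that $[G:H \cap G]$ is bounded above by $[G_\varphi : H]$, combined with the hypothesis that $G$ has no proper finite-index subgroups.
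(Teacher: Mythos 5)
Your proof is correct and follows the same route as the paper: use the finite index of $H$ to get $[G:G\cap H]<\infty$, invoke NFQ to conclude $G\leqslant H$, and then read off $H=\pi^{-1}(k\Z)\cong G_{\varphi^k}$ with $k=[G_\varphi:H]$. The paper's version is just a terser rendering of exactly this argument, so no further comparison is needed.
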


\begin{proof}
By the assumptions, $[G_\varphi:H]<\infty$, hence $[G:(G\cap H)]<\infty$, therefore $\ker \pi=G \leqslant H$ as $G$ is NFQ.
This forces $H$ to be of the form $\pi ^{-1}( k \mathbb Z)$ for
some $k$. The value of $k$ can then be easily deduced:
$$k=[\mathbb Z : k \mathbb Z] = [G_\varphi: \pi ^{-1}(k \mathbb Z)] = [G_\varphi : H],$$
as stated in the proposition.
\end{proof}

Combining Propositions~\ref{prop:simple1} and \ref{prop:simple2}, we immediately obtain

\begin{corollary}\label{cor:product-sbgrp}
Let $G$ be a NFQ group, and $\varphi \in \hbox{\rm Aut}(G)$. Then $G_\varphi$
is commensurable with $G_{id_G}\cong G \times \Z$ if and only if the element
$\overline{\varphi}\in \hbox{\rm Out}(G)$ has finite order.
\end{corollary}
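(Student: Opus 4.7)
The plan is to chain together Propositions~\ref{prop:simple1} and \ref{prop:simple2} in a straightforward way, after noting that any NFQ group $G$ has no epimorphism onto $\Z$ (if it did, composing with $\Z \twoheadrightarrow \Z/n\Z$ would give a nontrivial finite quotient), so Proposition~\ref{prop:simple1} applies to any pair of automorphisms of $G$.

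For the forward direction, suppose $G_\varphi$ is commensurable with $G_{id_G} = G \times \Z$. Then there exist finite index subgroups $H \leqslant G_\varphi$ and $K \leqslant G_{id_G}$ with $H \cong K$. Applying Proposition~\ref{prop:simple2} to each of $G_\varphi$ and $G_{id_G}$ (both $G$-mapping tori, and $G$ is NFQ), I get $H \cong G_{\varphi^k}$ for $k = [G_\varphi : H]$ and $K \cong G_{(id_G)^\ell} = G_{id_G}$ for $\ell = [G_{id_G}:K]$, since $(id_G)^\ell = id_G$. Hence $G_{\varphi^k} \cong G_{id_G}$. Now Proposition~\ref{prop:simple1} tells me that $\overline{\varphi^k} = \overline{\varphi}^k$ is conjugate in $\mathrm{Out}(G)$ to $\overline{id_G}$ or to $\overline{id_G}^{-1}$, but both of these are the identity element of $\mathrm{Out}(G)$. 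Thus $\overline{\varphi}^k$ is trivial in $\mathrm{Out}(G)$, so $\overline{\varphi}$ has finite order.

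For the converse, suppose $\overline{\varphi}^k = \overline{id_G}$ in $\mathrm{Out}(G)$ for some $k \geqslant 1$. By Proposition~\ref{prop:simple1} (with $\psi = id_G$), this gives $G_{\varphi^k} \cong G_{id_G} = G \times \Z$. On the other hand, Proposition~\ref{prop:simple2} identifies $G_{\varphi^k}$ with the finite-index subgroup $\pi^{-1}(k\Z) \leqslant G_\varphi$. Therefore $G_\varphi$ contains a finite index subgroup isomorphic to $G \times \Z$, establishing the desired commensurability.

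The argument is essentially a bookkeeping exercise; the only subtlety is recognising that $(id_G)^\ell = id_G$ so that finite-index subgroups of $G \times \Z$ also come out as copies of $G \times \Z$, and that $\overline{id_G}$ and $\overline{id_G}^{-1}$ coincide, so being conjugate to either one is the same as being trivial in $\mathrm{Out}(G)$. There is no real obstacle here beyond correctly invoking the two preceding propositions.
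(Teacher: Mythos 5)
Your proof is correct and is exactly the argument the paper intends: the corollary is stated as an immediate consequence of Propositions~\ref{prop:simple1} and \ref{prop:simple2}, and you chain them together precisely as the authors do (finite-index subgroups of the mapping tori are $G_{\varphi^k}$ and $G_{id_G}$ by Proposition~\ref{prop:simple2}, and Proposition~\ref{prop:simple1} then translates the isomorphism into triviality of $\overline{\varphi}^k$ in $\mathrm{Out}(G)$, and conversely). No issues.
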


Given two groups $A$ and $B$, consider their free product $G=A*B$. For any element $a \in A$ we can
define a natural automorphism  $\tau_a \in \mbox{\rm Aut}(G)$ by
$\tau_a(x):=a^{-1}xa$ for all $x \in A$ and $\tau_a(y):=y$ for all $y \in B$. Note that $(\tau_a)^k=\tau_{a^k}$
in ${\rm Aut}(G)$ for all $k \in \Z$.

\begin{lemma}\label{lem:tau-inner} Suppose that $B \neq \{1\}$, $a \in A$ and $G=A*B$. Then
$\tau_a \in {\rm Inn}(G)$ if and only if $a$ belongs to the center of $A$.
\end{lemma}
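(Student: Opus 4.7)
The plan is to handle the two implications separately. The forward direction is essentially immediate: if $a \in Z(A)$, then for every $x \in A$ we have $\tau_a(x) = a^{-1}xa = x$, and by definition $\tau_a$ fixes $B$ pointwise, so $\tau_a = \mathrm{id}_G = c_1 \in \mathrm{Inn}(G)$.

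For the non-trivial direction, suppose $\tau_a = c_g$ for some $g \in G$. I would first unpack what this identity says on each factor. Restricting to $B$ yields $y = g y g^{-1}$ for every $y \in B$, so $g$ centralizes every element of $B$. Restricting to $A$ and rewriting $a^{-1} x a = g x g^{-1}$ as $(ag)\, x\, (ag)^{-1} = x$, we see that $ag$ centralizes every element of $A$.

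The key (and only non-obvious) ingredient is the standard centralizer fact for free products: if $1 \neq w \in A$ then $C_G(w) \leqslant A$, and symmetrically for $B$. This is a short normal-form computation which I would include explicitly: if $h = h_1 \cdots h_n$ is a reduced word with $h \notin A$, then expanding $hwh^{-1}$ and tracking syllable cancellation against $w \in A$ yields a reduced word of length strictly greater than one, contradicting $hwh^{-1} = w$. This is the main obstacle, in the sense that everything else is bookkeeping organized around it.

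Applying this fact: since $B \neq \{1\}$, fix any $y_0 \in B \setminus \{1\}$; then $g \in C_G(y_0) \leqslant B$. If $A = \{1\}$, then automatically $a = 1 \in Z(A)$ and we are done. Otherwise fix $x_0 \in A \setminus \{1\}$ so that $ag \in C_G(x_0) \leqslant A$. But $a \in A$ and $g \in B$, and uniqueness of the normal form in $A*B$ forces $g = 1$ (otherwise $ag$ has reduced length $\geqslant 1$ in $B$ or reduced length $2$, neither lying in $A$). Hence $a = ag$ centralizes $A$, i.e. $a \in Z(A)$, completing the proof.
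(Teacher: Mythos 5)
Your proof is correct, but it follows a different route from the paper's. The paper argues by contraposition with a single witness element: assuming $a^{-1}ca \neq c$ for some $c \in A$, it takes $b \in B \setminus \{1\}$, forms $g := cb$, and observes that $\tau_a(g) = (a^{-1}ca)b$ is not conjugate to $cb$ by the conjugacy criterion for cyclically reduced words in free products (\cite[IV.1.4]{L-S}), so $\tau_a$ cannot be inner. You instead work directly from the equation $\tau_a = c_g$, restrict it to each free factor to get that $g$ centralizes $B$ and $ag$ centralizes $A$, and then invoke the centralizer structure of free products ($C_G(w) \leqslant A$ for $1 \neq w \in A$, and symmetrically for $B$) to force $g \in B$ and $ag \in A$, hence $g = 1$ and $a \in Z(A)$. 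Both arguments ultimately rest on normal forms in $A*B$, but they cite different standard facts: the paper's is shorter and delegates everything to the conjugacy criterion, while yours is essentially self-contained (you sketch the centralizer computation rather than citing it), handles the degenerate case $A = \{1\}$ explicitly, and yields the slightly stronger conclusion that when $\tau_a$ is inner it is actually the identity automorphism. Your sketch of the centralizer fact is accurate --- the only case needing a word of care is $h_n \in A$, where $h_n w h_n^{-1}$ remains a nontrivial $A$-syllable, so the conjugate still has reduced length at least $3$ --- and the rest of your bookkeeping (in particular the deduction $g \in A \cap B = \{1\}$) is sound.
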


\begin{proof} Clearly, if $a$ is central in $A$, then $\tau_a=id_G \in {\rm Inn}(G)$. Conversely, suppose that there is
$c \in A$ such that $a^{-1}ca \neq c$ in $A$. Take any $b \in B \setminus \{1\}$
and consider the element $g:=cb \in G$. Then $\tau_a(g)=a^{-1}cab$ is not conjugate to $g$ in $G=A*B$
by the criterion of conjugacy in free products (see \cite[IV.1.4]{L-S}).
Hence $\tau_a \notin {\rm Inn}(G)$, as required.
\end{proof}

Since the free product of two NFQ groups is again a NFQ group, we can put together Proposition \ref{prop:simple1},
Corollary \ref{cor:product-sbgrp} and Lemma \ref{lem:tau-inner} to achieve

\begin{corollary}\label{cor:main_aux} Let $A$ and $B$ be NFQ groups such that $B \neq \{1\}$ and $A$ has trivial center.
Then for $G=A*B$ and any  $a \in A$ the following are true:
\begin{itemize}
    \item $G_{\tau_a}$ is isomorphic to $G_{id_G}$ if and only if $a=1$ in $A$;
    \item $G_{\tau_a}$ is commensurable with $G_{id_G}$ if and only if $a$ has finite order in $A$.
\end{itemize}

\end{corollary}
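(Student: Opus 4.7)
The plan is to combine the three preceding results in a straightforward way. First I would note that the free product of two NFQ groups is again NFQ (any finite quotient of $A\ast B$ would restrict to a finite quotient of both $A$ and $B$), so $G = A\ast B$ satisfies the hypotheses of Proposition~\ref{prop:simple1} and Corollary~\ref{cor:product-sbgrp}. Moreover, since $B \neq \{1\}$, Lemma~\ref{lem:tau-inner} is applicable and identifies the kernel of the map $A \to {\rm Out}(G)$, $a \mapsto \overline{\tau_a}$, with the center $Z(A)$. Since $A$ has trivial center, this map is injective.

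For the first bullet point, I would apply Proposition~\ref{prop:simple1} with $\varphi = \tau_a$ and $\psi = id_G$. Since $\overline{id_G}$ is the identity of ${\rm Out}(G)$, its conjugacy class consists of itself alone, and $\overline{id_G}^{-1} = \overline{id_G}$. Thus $G_{\tau_a} \cong G_{id_G}$ if and only if $\overline{\tau_a} = \overline{id_G}$ in ${\rm Out}(G)$, i.e.\ $\tau_a \in {\rm Inn}(G)$. By Lemma~\ref{lem:tau-inner} this happens exactly when $a \in Z(A) = \{1\}$, which gives the first equivalence.

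For the second bullet point, I would apply Corollary~\ref{cor:product-sbgrp}: $G_{\tau_a}$ is commensurable with $G_{id_G}$ if and only if $\overline{\tau_a}$ has finite order in ${\rm Out}(G)$. The translation $\overline{\tau_a}$ finite order $\Longleftrightarrow$ $a$ finite order in $A$ is the content of this step, and it uses the observation (made just before Lemma~\ref{lem:tau-inner}) that $(\tau_a)^k = \tau_{a^k}$. Indeed, if $a^n = 1$ for some $n \geq 1$, then $\tau_a^n = \tau_{a^n} = id_G$, so $\overline{\tau_a}$ has finite order. Conversely, if $\overline{\tau_a}^n = \overline{id_G}$ for some $n \geq 1$, then $\tau_{a^n} = \tau_a^n \in {\rm Inn}(G)$, and Lemma~\ref{lem:tau-inner} forces $a^n \in Z(A) = \{1\}$, so $a$ has finite order in $A$.

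The main ``obstacle'' here is really just bookkeeping: there is no new idea beyond packaging Proposition~\ref{prop:simple1}, Corollary~\ref{cor:product-sbgrp}, and Lemma~\ref{lem:tau-inner} together. The two mild points one must verify carefully are (a)~that the conjugacy class of the identity in ${\rm Out}(G)$ is trivial, so that the ``$\overline{\psi}^{\pm 1}$'' ambiguity in Proposition~\ref{prop:simple1} collapses when $\psi = id_G$, and (b)~that the homomorphism $a \mapsto \tau_a$ intertwines powers in $A$ with powers in ${\rm Aut}(G)$, so that finite order transfers cleanly across the correspondence.
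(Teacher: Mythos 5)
Your proof is correct and follows exactly the route the paper intends: the paper states this corollary without a written proof, presenting it as the immediate combination of Proposition~\ref{prop:simple1}, Corollary~\ref{cor:product-sbgrp}, and Lemma~\ref{lem:tau-inner}, which is precisely what you carry out. The two details you flag (the collapse of the $\overline{\psi}^{\pm 1}$ ambiguity for $\psi = id_G$, and the identity $(\tau_a)^k = \tau_{a^k}$ transferring finite order) are exactly the right bookkeeping points.
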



\section{Word and torsion problems in NFQ groups}\label{sec:conj-ppp}
For a finite set $X$, we use $X^*$ to denote the set of words with letters from $X^{\pm 1}$.
Let $R$ be a  set of words from $X^*$ and suppose that $G$ is a group
given by the presentation $P=\langle \, X \, \| \, R \, \rangle$.

For a subset $Z \subseteq X^*$, we say the \textit{word problem for $Z$ in $G$ is solvable}
if there is an algorithm, which takes on input a word
$w \in Z$ and decides whether or not this word represents the identity element of $G$.
If $Z=X^*$, then the word problem for $Z$ in $G$ is simply known as {\it the word problem in $G$}.
The word problem is one of the three fundamental group-theoretical decision problems
introduced by Max Dehn \cite{Dehn} in 1911 (other two being the conjugacy and the isomorphism problems). It is well known that if the word problem for $G$ is solvable with
respect to one finite generating set, then it is solvable with respect to any other finite generating set of $G$.

For an arbitrary subset $Z \subseteq X^*$,  one can also consider the {\it torsion problem for $Z$ in $G$}, asking whether there exists an
algorithm which inputs a word $w \in Z$, and decides whether or not $w$ represents an element of finite order in $G$.
This is closely related to some decision problems considered
by Lipschutz and Miller in \cite{lipschutz-miller} (for instance, it is a special case of the
{\it power problem}).

\begin{proposition}\label{prop:NFQ-emb} Every finitely presented group $H$ can be embedded into a finitely
presented NFQ group $A$ with trivial center. Moreover, if the word problem in $H$ is solvable then it is also
solvable in $A$.
\end{proposition}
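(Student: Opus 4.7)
The plan is to build $A$ as an HNN-extension (or small tower of them) glued onto a free product of $H$ with a fixed finitely presented infinite simple group $S$, using the NFQ-ness of $S$ to eat up finite quotients and the normal form theorems to keep $H$ embedded. Fix $S$ to be a finitely presented infinite simple group, such as Thompson's group $V$; then $S$ is automatically NFQ, has trivial center, and by Kuznetsov's theorem has solvable word problem. After possibly replacing $H$ by $H*\Z$, assume $H$ has a finite generating set $\{h_1,\dots,h_n\}$ in which each $h_i$ has infinite order. Choose elements $s_1,\dots,s_n\in S$ of infinite order (abundant in $V$) and set
\[
  A_0 := \bigl\langle\, H*S,\; t_1,\dots,t_n \;\bigm|\; t_i^{-1}h_i t_i = s_i,\ 1\le i\le n\,\bigr\rangle,
\]
an iterated HNN-extension of $H*S$ along the isomorphisms $\langle h_i\rangle\cong\Z\cong\langle s_i\rangle$. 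Britton's lemma gives $H*S\hookrightarrow A_0$ and hence $H\hookrightarrow A_0$; only finitely many generators and relations are added, so $A_0$ is finitely presented.

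Next I would verify the three required properties. For \emph{NFQ}: let $\pi\colon A_0\twoheadrightarrow Q$ with $Q$ finite. Since $\pi(S)$ is a finite quotient of $S$ and $S$ is NFQ, $\pi(S)=1$; the defining relations then force $\pi(h_i)=1$, so $\pi(H)=1$, and $Q$ is generated by the images of $t_1,\dots,t_n$. To kill these residual generators I would add one more HNN layer, adjoining new stable letters $u_j$ with relations $u_j^{-1}t_j u_j = s'_j$ for infinite-order $s'_j\in S$; the same analysis then forces $\pi(t_j)=1$, and a final such layer (for the $u_j$'s) yields a finitely presented group $A$ with $H\hookrightarrow A$ and no nontrivial finite quotients. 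For \emph{trivial center}: $Z(A)\cap S \le Z(S)=1$, and the Bass--Serre/HNN structure forces any central element to lie in (a conjugate of) the base subgroup $H*S$, whose center is trivial since neither factor is trivial. For the \emph{word problem}: if $H$ has solvable word problem, then so does $H*S$ (normal forms in the free product), and Britton's lemma lifts solvability to each HNN layer provided membership in the associated cyclic subgroups is decidable in the base — which it is, since membership in $\langle h_i\rangle$ or $\langle s_i\rangle$ reduces to the power problem for a single element of infinite order, in turn reducible to the word problem in the ambient group. Iterating through finitely many layers yields solvable word problem in $A$.

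The main obstacle is the simultaneous balancing act in the last step: each extra HNN layer added to force NFQ introduces fresh stable letters that must themselves be eliminated in finite quotients, and I must ensure the process terminates after finitely many layers without collapsing $H$ or breaking solvability of the word problem. I would prefer, if possible, to circumvent the iteration altogether by replacing $A_0$ with a single amalgamated free product $H*_{F}S$ over a carefully chosen common free subgroup $F$ that normally generates $H$ inside the amalgam; this would accomplish NFQ, the embedding of $H$, trivial center, and preservation of the word problem in one stroke, with the price being a more delicate choice of $F$ and the corresponding injections into $H$ and $S$.
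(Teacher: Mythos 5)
Your construction has two genuine gaps, and you have in fact put your finger on the first one yourself without resolving it. In any finite quotient of $A_0$ the images of $S$, hence of the $h_i$, hence of $H$, are indeed trivial, but the stable letters $t_1,\dots,t_n$ survive; adjoining a second HNN layer $u_j^{-1}t_ju_j=s_j'$ kills the $t_j$ but leaves the $u_j$ alive, and the same happens at every subsequent layer. The process never terminates: after finitely many layers there is always a last batch of stable letters, and the resulting group surjects onto a nontrivial free abelian group generated by their images, so it is never NFQ. Some genuinely different device is needed to dispose of the conjugators, and the paper's proof uses one: it takes $G=S*S$, which is NFQ and hyperbolic relative to the two copies of $S$, and invokes the quotient theorem for relatively hyperbolic groups \cite[Thm.~1.1]{SQ} to embed $H$ into a finitely presented quotient $Q$ of $G$, obtained from $H*G$ by adding finitely many relations. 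Then $Q$ is NFQ simply because it is a quotient of the NFQ group $G$ (no new generators are ever introduced, so nothing is left over to survive in finite quotients), and $A:=Q*S$ restores the trivial center.

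The second gap is in the word problem step. To run Britton's lemma algorithmically you must decide membership in the associated subgroups $\langle h_i\rangle$ and $\langle s_i\rangle$, and your claim that membership in the cyclic subgroup generated by an infinite-order element ``reduces to the word problem in the ambient group'' is false in general: this is an instance of the power problem, which can be unsolvable in groups with solvable word problem --- indeed, the gap between the word problem and the power/torsion problem is exactly what Propositions \ref{prop:first_constr} and \ref{prop:second_constr} of this paper exploit. Since $H$ is an arbitrary finitely presented group, you cannot guarantee decidable membership in $\langle h_i\rangle$; from the word problem alone such membership is only semi-decidable. The paper sidesteps this by deducing solvability of the word problem in $Q$ from its relative hyperbolicity with respect to $\{H,S,S\}$ (each peripheral subgroup having solvable word problem), and then in $A=Q*S$ from free product normal forms. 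Your proposed fallback (a single amalgam $H*_F S$ over a free subgroup) would face the same two issues --- forcing $H$ into the normal closure of $S$ and deciding membership in $F$ inside $H$ --- and is not carried out.
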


\begin{proof} Take any infinite finitely presented simple group $S$ (for instance, Thompson's group $T$ or $V$~\cite{cannon-floyd-parry},
or see \cite{burger-mozes-1, burger-mozes-2,caprace-remy-1,caprace-remy-2} for other such
groups) and consider the free product $G=S*S$.
Then $G$ is NFQ and hyperbolic relative to these two copies of $S$. Therefore,
by Theorem 1.1 from \cite{SQ}, $H$ can be isomorphically embedded into some quotient $Q$ of $G$.
Moreover, from the proof of this theorem, it follows that
$Q$ can be obtained from $H*G$ by adding only finitely many defining relations. Consequently, as both $H$ and $G$
are finitely presented, $Q$ will also be finitely presented.
The group $Q$ is NFQ as a quotient of the NFQ group $G$. One can check that the center of the group $Q$,
obtained from \cite[Thm. 1.1]{SQ}, is in fact trivial.
However, it is easy to bypass this, by setting $A:=Q*S$ and observing that $A$ is still finitely presented, NFQ,
has trivial center (as a non-trivial free product -- see \cite[6.2.6]{Robinson})
and contains a copy of $H$.

Now, suppose that the word problem in $H$ is solvable. Note that the same is true in $S$, because the
word problem is solvable in any recursively presented simple group (\cite[IV.3.6]{L-S}).
By \cite[Thm. 1.1]{SQ} the group $Q$ above is hyperbolic relative to the family of subgroups, consisting of $H$
and two copies of $S$. Therefore $Q$ has solvable word problem (see \cite[Thm. 3.7]{Farb} or \cite[Cor. 5.5]{Osin-rel_hyp_geom}).
Finally, the word problem is solvable in $A=Q*S$ by \cite[IV.1.3]{L-S}.
\end{proof}

\begin{proposition}\label{prop:first_constr} There exists a NFQ group $A_1$, with trivial center and finite presentation $P_1=\langle  X_1 \, \|\, R_1  \rangle$,
and a recursively enumerable subset of words $Z_1=\{z_1,z_2,\dots\} \subset X_1^*$ such that the word problem in $A_1$ is solvable but the torsion problem for $Z_1$ in $A_1$ is unsolvable.
\end{proposition}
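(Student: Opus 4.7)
The plan is to reduce the problem to the existence of a finitely presented group $H$ with solvable word problem on which the torsion problem is undecidable for some recursively enumerable set of words; once such $H$ is available, Proposition~\ref{prop:NFQ-emb} transports it inside a NFQ group with trivial center while preserving the solvability of the word problem, and the injective embedding automatically transports the undecidability of the torsion problem.

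First, I would invoke a classical construction from the Lipschutz--Miller circle of results on the power problem~\cite{lipschutz-miller} to produce a finitely presented group $H=\langle\,Y\,\|\,S\,\rangle$ with solvable word problem, together with a recursive enumeration of words $u_1,u_2,\ldots\in Y^*$ such that the set
$$T\;:=\;\{\,n\in\N\;:\;u_n\text{ has finite order in }H\,\}$$
is recursively enumerable but not recursive. The r.e.~property of $T$ is automatic from solvability of the word problem in $H$: one can semi-decide finite order by successively testing whether $u_n^k=1$ for $k=1,2,\ldots$. The content of the construction is to arrange that the complement of $T$ is \emph{not} recursively enumerable.

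Second, I would apply Proposition~\ref{prop:NFQ-emb} to $H$, obtaining a finite presentation $P_1=\langle\,X_1\,\|\,R_1\,\rangle$ of a NFQ group $A_1$ with trivial center, an explicit injective homomorphism $\iota\colon H\hookrightarrow A_1$ (recorded at the level of words by a computable map $\phi\colon Y^*\to X_1^*$), and a solvable word problem in $A_1$. Setting $Z_1:=\{\,\phi(u_n):n\in\N\,\}\subset X_1^*$, the recursiveness of $(u_n)$ together with the computability of $\phi$ makes $Z_1$ a recursively enumerable subset of $X_1^*$. Because $\iota$ is an injective group homomorphism it preserves orders, so $\phi(u_n)$ has finite order in $A_1$ if and only if $u_n$ has finite order in $H$, if and only if $n\in T$; any algorithm deciding the torsion problem for $Z_1$ in $A_1$ would therefore decide membership in $T$, contradicting its non-recursiveness.

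The only nontrivial step is the first, namely obtaining a finitely presented group with solvable word problem and undecidable torsion problem on a recursively enumerable set. Everything afterwards is formal: Proposition~\ref{prop:NFQ-emb} delivers the NFQ/trivial-center/solvable-word-problem properties, and the order-preserving feature of injective homomorphisms transports the undecidability cleanly to $A_1$.
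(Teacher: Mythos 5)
Your overall architecture coincides with the paper's: produce a group $H$ with solvable word problem together with a recursively enumerable sequence of words on which finiteness of order is undecidable, push it into a finitely presented NFQ group with trivial center via Proposition~\ref{prop:NFQ-emb}, and observe that an injective homomorphism preserves orders of elements, so the undecidability transports. The second half of your argument (computability of the rewriting map $\phi$, recursive enumerability of $Z_1$, and the reduction of membership in $T$ to the torsion problem for $Z_1$) is correct and is essentially what the paper does.

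The genuine gap is the step you yourself flag as ``the only nontrivial'' one: you do not construct the group $H$, but instead appeal to an unspecified ``classical construction from the Lipschutz--Miller circle of results.'' The cited work \cite{lipschutz-miller} concerns the power and related problems, and it is not at all immediate that it supplies, off the shelf, a \emph{finitely presented} group with solvable word problem together with a recursive sequence of words whose torsion set is non-recursive; a careful referee would demand either a precise statement and reference or an actual construction. This is exactly where the paper does its real work: it takes P.~Hall's finitely generated center-by-metabelian group $H_0$, whose center is free abelian on elements $d_1,d_2,\dots$ given by explicit words; picks a computable $f\colon\N\to\N$ with non-recursive range and passes to the quotient $H_1=H_0/\langle d_{f(n)}^n\mid n\in\N\rangle$, so that $d_r$ has finite order iff $r\in f(\N)$; gives a non-trivial direct argument (using the normal form of elements of the center and the finiteness of the set of divisors of each exponent) that $H_1$ still has solvable word problem; and only then, since $H_1$ is merely recursively presented, invokes Clapham's theorem \cite{Clapham} to embed $H_1$ into a \emph{finitely presented} group $H_2$ while preserving solvability of the word problem, before applying Proposition~\ref{prop:NFQ-emb}. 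Your proposal elides both the construction of the undecidable torsion phenomenon and the Clapham step needed to reach finite presentability without losing the word problem; without these, the proof is incomplete.
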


\begin{proof} Let $H_0$ be the center-by-metabelian group constructed by P. Hall in \cite[p. 435]{P.Hall}. Namely, $H_0$ is generated by two elements $a,b$, subject to the relations
$$[[b_i,b_j],b_k]=1, \mbox{ for } i,j,k=0,\pm 1,\pm 2,\dots, \mbox{ where } b_i:=a^{-i}ba^{i}, [x,y]:=x^{-1}y^{-1}xy, \mbox{ and }$$
$$c_{i,j}=c_{i+k,j+k}, \mbox{ for } j>i, \quad i,j,k=0,\pm 1,\pm 2,\dots, \mbox{ where } c_{i,j}:=[b_j,b_i].$$
As Hall proved in \cite[p. 435]{P.Hall}, the center of $H_0$ is the free abelian group with free abelian basis $\{d_1,d_2,\dots\}$, where $d_r:=c_{0,r}=[a^{-r}ba^{r},b]$, $r=1,2,\dots$.

Let $\langle a,b \,\|\, R_0\rangle$ be the above presentation for $H_0$. Clearly this presentation is recursive.
Now, consider a computable (recursive) function $f: \mathbb{N} \to \mathbb{N}$ with non-recursive range $f(\N) \subset \mathbb{N}$.
Let $H_1$ be the quotient of $H_0$ by the central subgroup $\langle d_{f(n)}^n \mid n \in \N\rangle$ where $d_r$, $r \in \N$, are as above.
Then $H_1$ has the presentation $$\left\langle a,b \, \left\| \, R_0,\left([a^{-f(n)}ba^{f(n)},b]\right)^n, \right. n \in \N \right\rangle .$$
The group $H_1$ will be recursively presented since $R_0$ is recursively enumerable and $f$ is computable.

By abusing notation, we will continue writing $a,b,b_i,d_r$ for the images of the corresponding elements of $H_0$ in $H_1$.
We can solve the word problem in $H_1$ as follows. Given a word $w$, over the alphabet $\{a^{\pm 1},b^{\pm 1}\}$,
we want to determine whether $w=1$ in $H_1$. First we compute the sum $\varepsilon_a(w)$ of all exponents of $a$ in $w$. If $\varepsilon_a(w) \neq 0$, then $w \neq 1$ in $H_1$ as
there is a homomorphism $\alpha:H_1 \to \langle a \rangle$, whose kernel is generated by $b_i$, $i \in \Z$, such that $\alpha(w)=a^{\varepsilon_a(w)} \neq 1$. If $\varepsilon_a(w) = 0$, then
$w \in B:=\langle b_i,i \in \Z\rangle$ and we can re-write $w$ as a word $w_1$ in letters $b_i$, $i \in \Z$. If for some $i \in \Z$, $\varepsilon_{b_i}(w_1) \neq    0$, then, again,
$w \neq 1$ in $H_1$, because its image will be non-trivial in the abelianization of $B$. Otherwise, $w$ will represent an element of the center $C:=\langle d_r, r \in \N\rangle$ of $H_1$,
and we can re-write $w_1$ as a word $w_2\equiv d_{r_1}^{n_1}d_{r_2}^{n_2} \cdots d_{r_l}^{n_l}$, where $l\geqslant 0$, $1 \leqslant r_1<r_2<\dots<r_l$, and $n_j \in \Z\setminus\{0\}$ for $j=1,\dots,l$.
Note that $C=\bigoplus_{r \in \N} \langle d_r \rangle$ by definition. If $l=0$ then $w=w_2=1$ in $H_1$. If $l>0$, then $w_2=1$ in $C$ if and only if the order of $d_{r_j}$ in $H_1$ divides $n_j$
for all $j=1,2,\dots,l$. The latter can be verified as follows: for every positive divisor $m$ of $n_j$, we compute $f(m)$ and check if it is equal to $r_j$. If this happens for some
divisor $m$ of $n_j$, then the order of $d_{r_j}$ in $H_1$ is $m$, by construction, and so $d_{r_j}^{n_j}=1$. If this is true for all $j=1,\dots,l$, then $w=w_2=1$ in $H_1$.
As each $n_j$ has only finitely many divisors, this can be checked in
finitely many steps. Finally, if there is $j \in \{1,\dots,l\}$ such that for every positive divisor $m$ of $n_j$, $f(m) \neq r_j$, then the order of $d_{r_j}$ in $H_1$ does not divide
$n_j$, and hence $w=w_2 \neq 1$ in $H_1$.

Thus $H_1$ is a finitely generated recursively presented group with solvable word problem. By a theorem of Clapham \cite[Thm. 6]{Clapham}, $H_1$ can be embedded in a finitely presented group
$H_2$ with solvable word problem. Now we can use Proposition \ref{prop:NFQ-emb} to embed $H_2$ into a finitely presented NFQ group $A_1$, with trivial center and solvable word problem.
Let $P_1=\langle \, X_1 \, \| \, R_1  \, \rangle$ be some finite presentation for $A_1$. Since $H_1 \leqslant A_1$, the generators $a,b$ of $H_1$ can be represented by
some words $w_1$, $w_2$ (respectively) in the alphabet $X_1^{\pm 1}$, and hence every word in letters from
$\{a^{\pm 1},b^{\pm 1}\}$ can be effectively re-written in letters from  $X_1^{\pm 1}$. So, for every $r \in \N$ we can effectively compute
a word $z_r\in X_1^*$ representing $d_r$ in $A_1$ and set $Z_1:=\{z_r \mid r \in \N\} \subset X_1^*$. By construction, $Z_1$ is recursively enumerable.

Suppose that the torsion problem for $Z_1$ in $A_1$ is solvable. Then for any $r \in \N$ we can compute the word $z_r \in Z_1$, representing $d_r$ in $A_1$, and check if $d_r$
has finite order in $A_1$. But the latter happens if and only if $r \in f(\N)$. Thus we would be able to determine whether or not $r$ belongs to the range of $f$, contradicting to the choice of $f$.
Therefore the torsion problem for $Z_1$ in $A_1$ is unsolvable and the proposition is proved.
\end{proof}

The next statement suggests a construction which is in some sense opposite to the construction of Proposition \ref{prop:first_constr}.

\begin{proposition}\label{prop:second_constr} There exists a NFQ group $A_2$, with trivial center and finite presentation $P_2=\langle X_2 \, \|\, R_2 \rangle$,
and a recursively enumerable subset of words $Z_2 =\{z_1,z_2,\dots\} \subset X_2^*$ such that every word from $Z_2$ represents an element of order at most $2$ in $A_2$ but
the word problem for $Z_2$ in $A_2$ is unsolvable.
\end{proposition}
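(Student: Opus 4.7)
The plan is to mirror the construction of Proposition \ref{prop:first_constr} with the roles of the orders reversed: instead of creating central elements of various prescribed orders encoded by an undecidable function, I would create central elements that are uniformly killed to order at most $2$, in such a way that the question of which of them already become trivial is undecidable.

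Concretely, I would start from the same Hall center-by-metabelian group $H_0$ used in Proposition \ref{prop:first_constr}, whose center is the free abelian group on the elements $d_r = [a^{-r} b a^r, b]$, $r \in \N$. Let $g : \N \to \N$ be a computable injection with non-recursive range $R := g(\N)$, and define
\[
H_1 := H_0 \, / \, \langle\langle \, \{d_r^2 : r \in \N\} \cup \{d_{g(n)} : n \in \N\} \, \rangle\rangle.
\]
The group $H_1$ is finitely generated and recursively presented. Since all the added relators are already central in $H_0$, the normal closure collapses to the ordinary subgroup they generate inside the free abelian center $C = \bigoplus_r \langle d_r\rangle$, namely $\bigoplus_{r \in R} \langle d_r\rangle \, \oplus \, \bigoplus_{r \notin R} \langle d_r^2\rangle$. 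Consequently in $H_1$ every $d_r$ has order at most $2$, and $d_r = 1$ if and only if $r \in R$; the word problem for the sequence $(d_r)_{r \in \N}$ in $H_1$ therefore reduces to membership in $R$ and is unsolvable.

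To upgrade $H_1$ to a finite presentation, I would first embed $H_1$ into a finitely presented group $H_2$ via Higman's embedding theorem, and then embed $H_2$ into a finitely presented NFQ group $A_2$ with trivial center using Proposition \ref{prop:NFQ-emb}, obtaining a finite presentation $P_2 = \langle X_2 \, \| \, R_2 \rangle$. Fixing words $w_a, w_b \in X_2^*$ that represent the images of $a$ and $b$, I would define $z_r \in X_2^*$ to be the word obtained from $[w_a^{-r} w_b w_a^r, w_b]$, and set $Z_2 := \{z_r : r \in \N\}$. By construction $Z_2$ is recursively enumerable; each $z_r$ represents an element of order at most $2$ in $A_2$, since element orders are preserved under the injective chain $H_1 \hookrightarrow H_2 \hookrightarrow A_2$; and the word problem for $Z_2$ in $A_2$ coincides with membership in $R$, hence is unsolvable.

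The main technical point that requires careful justification is that the normal closure computed above introduces no unintended identifications in $H_1$. This rests entirely on Hall's explicit identification of the center of $H_0$ as free abelian on $\{d_r\}$: because all added relators lie in this central subgroup, their normal closure coincides with the ordinary subgroup they generate there, which gives precisely the prescribed orders. After that, everything else is a routine concatenation of standard embedding theorems.
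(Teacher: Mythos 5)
Your proposal is correct and follows essentially the same route as the paper: quotient Hall's group by the central subgroup generated by $\{d_n^2\}$ together with the $d_r$ indexed by the non-recursive range of a computable function, then apply Higman's embedding and Proposition \ref{prop:NFQ-emb}. The only differences are cosmetic (you require the function to be injective, which is unnecessary, and you spell out the verification that the normal closure of the central relators causes no unintended collapse, a point the paper leaves implicit).
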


\begin{proof} Again, let us start with Hall's group $H_0$, used in the proof of Proposition  \ref{prop:first_constr}, keeping the same notation as before. Let $f:\N \to \N$ be a computable
function with non-recursive range. We now let $H_1$ be the quotient of $H_0$ by the central subgroup $\langle d_n^2,d_{f(n)} \mid n\in \N\rangle$.

As before, $H_1$ will be finitely generated and recursively presented, however, the word problem in $H_1$ will be unsolvable (since the set $f(\N)$ is not recursive).
By the celebrated theorem of Higman \cite{Higman}, one can embed $H_1$ into a finitely presented group $H_2$, and applying Proposition \ref{prop:NFQ-emb}, we can embed $H_2$ into a finitely presented
NFQ group $A_2$ with trivial center.

Let $P_2=\langle \, X_2 \, \| \, R_2  \, \rangle$ be some finite presentation for $A_2$.
Fix some words $w_1,w_2\in X_2^*$ representing the generators $a$, $b$ (respectively) of $H_1$ in $A_2$. Clearly there is an algorithm which takes on input a word in the alphabet $\{a^{\pm 1},b^{\pm 1}\}$ and outputs
a corresponding word in the alphabet $X_2^{\pm 1}$ (substituting every $a$-letter by $w_1$ and every $b$-letter by $w_2$). For each $r\in \N$, let $z_r \in X_2^*$ be the
word representing $d_r \in H_1$, obtained this way, and set $Z_2:=\{z_r \mid r \in \N\}\subset X_2^*$. Evidently the set of words $Z_2$ is recursively enumerable and every word from this set represents an element
$d_r$, which has order at most $2$ in $A_2$.
By construction, $z_r=1$ in $A_2$ if and only if $d_r=1$ in $H_1$, which happens if and only if
$r \in f(\N)$. Since $f(\N)$ is non-recursive, we see that the word problem for $Z_2$ in $A_2$ is unsolvable.
\end{proof}


\section{Proofs of the theorems}\label{sec:result}

We are now ready to establish our two theorems.

\begin{proof}[Proof of Theorem~\ref{thm:main2}] We start with the presentation $P_1=\langle  X_1 \, \|\, R_1 \rangle$ of the group $A_1$, and the recursively enumerable set of words $Z_1=\{z_1,z_2,\dots,\} \subset X_1^*$, which were
constructed in Proposition~\ref{prop:first_constr}.
Take some infinite finitely presented simple group $B$ and fix some finite presentation $\langle Y \, \| \, S\rangle$ of it;
recall that the word problem in $B$ is solvable by \cite[IV.3.6]{L-S}.
Let $z_0 \in X_1^*$ be the empty word. For each $r \in \N\cup \{0\}$, let  $d_r$ denote the element of $A_1$ represented by the $z_r \in Z_1$; let $G:=A_1*B$ and
let $C_{r+1}$ be the cyclic group of order $r+1$. Then the group
$K_r:=G_{\tau_{d_r}} \times C_{r+1}$ has the
presentation
\begin{multline*} P_{1,r}:=\left\langle X_1,Y,t,u \, \left \|\, R_1,S,\; t^{-1} x^{-1} t z_r^{-1} x z_r, \; t^{-1}y^{-1} ty,\; u^{-1}x^{-1} ux, \right.\right.\\
\left. u^{-1}y^{-1}uy,\;  u^{-1}t^{-1}ut, \; u^{r+1}, \mbox{ for all } x \in X_1
\mbox{ and } y \in Y \right\rangle.\end{multline*}

Since the sets $X_1$, $Y$, $R_1$ and $S$ are finite, for every $r \in \N\cup \{0\}$, $P_{1,r}$ is a finite presentation of a group.
Note that the presentation $P_{1,0}$ defines the group $K_0 \cong G_{id_G} \cong G \times \Z$.

Now, consider the class of group presentations $\mathcal{C}_1:=\{P_{1,r} \mid r \in\N\cup \{0\}\}$. We can make the following observations.
\begin{description}
  \item[(a)] the class $\mathcal{C}_1$ is recursively enumerable by definition.
  \item[(b)] the word problem in $\mathcal{C}_1$ is uniformly solvable. This easily follows from the fact that the word problem
  in $G=A_1*B$ is solvable and for each $r \in \N\cup\{0\}$, $G \lhd K_r$ and
        $K_r/G \cong \Z \times C_{r+1}$.
  \item[(c)] the isomorphism problem within  $\mathcal{C}_1$ is trivially solvable. This is because for any $r \in \N\cup \{0\}$, the abelianization of the group $K_r$ is isomorphic to $\Z\times C_{r+1}$
        (as $G$ is NFQ), hence for any $q \in \N\cup \{0\}$, $q\neq r$, the group $K_r$ is not isomorphic to $K_q$ since their abelianizations have different torsion subgroups.
        Thus any two distinct presentations from $\mathcal{C}_1$ define non-isomorphic groups.
  \item[(d)] the commensurability problem within $\mathcal{C}_1$ is unsolvable. Indeed, since the index $[K_r:G_{\tau_{d_r}}]=r+1$ is finite, the group $K_r$ is commensurable with the group $G_{\tau_{d_r}}$ for each
        $r \in \N\cup\{0\}$. So, if we could decide
        whether $K_r$ is commensurable with $K_0$, then we would be able to decide whether $G_{\tau_{d_r}}$ is commensurable with $G_{id_G}$, which, by Corollary \ref{cor:main_aux},
        would imply that the torsion problem for $Z_1$ in $G=A_1*B$ is solvable, contradicting to the claim of Proposition \ref{prop:first_constr}.
\end{description}

Thus the class of group presentations $\mathcal{C}_1$ satisfies all of the required properties.
\end{proof}

\begin{proof}[Proof of Theorem~\ref{thm:main1}]
Now  we start with the presentation $P_2=\langle  X_2 \, \|\, R_2 \rangle$ of the group $A_2$,
constructed in Proposition~\ref{prop:second_constr}, and the recursively enumerable set of words $Z_2=\{z_1,z_2,\dots,\} \subset X_2^*$.
Take some infinite finitely presented simple group $B$ and fix some finite presentation $\langle Y \, \| \, S\rangle$ of it; then $B$ will have solvable word problem (\cite[IV.3.6]{L-S}).
Let $z_0 \in X_2^*$ be the empty word. For each $r \in \N\cup \{0\}$, let  $d_r$ denote the element of $A_2$ represented by the $z_r \in Z_2$ and let $G:=A_2*B$.
Then the group $G_{\tau_{d_r}}$ has the
presentation
$$ P_{2,r}:=\left\langle X_2,Y,t \, \left \|\, R_2,S,\; t^{-1} x^{-1} t z_r^{-1} x z_r, \; t^{-1}y^{-1} ty,  \mbox{ for all } x \in X_2
\mbox{ and } y \in Y\right. \right\rangle.$$

Since the sets $X_2$, $Y$, $R_2$ and $S$ are finite, for every $r \in \N\cup \{0\}$, $P_{2,r}$ is a finite presentation of a group.
As before, the presentation $P_{2,0}$ defines the group $G_{id_G} \cong G \times \Z$.

For the class of finite presentations $\mathcal{C}_2:=\{P_{2,r} \mid r \in\N\cup \{0\}\}$ we can observe the following.
\begin{description}
  \item[(a)] the class $\mathcal{C}_2$ is recursively enumerable by definition.
  \item[(b)] the commensurability problem within $\mathcal{C}_2$ is trivially solvable, because any presentation from this class defines the group $G_{\tau_{d_r}}$, for some $r \in \N\cup\{0\}$,
        which is commensurable with $G_{id_G}$ by Corollary \ref{cor:main_aux}, as the element $d_r\in A_2$ has finite order by construction. Thus any two presentations from
         $\mathcal{C}_2$ define commensurable groups.
  \item[(c)] the isomorphism problem within  $\mathcal{C}_2$ is unsolvable. Indeed, according to Corollary   \ref{cor:main_aux}, for any $r \in \N$, the group $G_{\tau_{d_r}}$,
        defined by $P_{2,r}$ is isomorphic to $G_{id_G}$, defined by $P_{2,0}$, if and only if $z_r=1$ in $G$. Thus the isomorphism problem within $\mathcal{C}_2$ is equivalent to the
     word problem for $Z_2$ in $A_2$, which is unsolvable by construction.
\end{description}

Thus the class $\mathcal{C}_2$ satisfies all of the needed properties.
\end{proof}

\section{Membership problem for some classes of groups}\label{sec:memb}
The purpose of this section is to show that the claims of Theorems \ref{thm:main2} and \ref{thm:main1} are optimal; that is, the 
recursively enumerable classes of groups we construct in these theorems cannot be recursive. More precisely, suppose that we are given a class of finitely presented
groups $\mathcal{K}$, closed under isomorphism.  The class $\mathcal K$ is said to be \emph{recursive}, or, equivalently, 
the \emph{membership problem to $\mathcal{K}$ is decidable} (within the class of all finitely presented groups), if there is an algorithm, which takes on input a finite 
presentation and decides whether or not the group defined by this presentation belongs to $\mathcal K$.

The next statement essentially shows that there is no recursive class of groups satisfying the claim of Theorem \ref{thm:main2}.

\begin{proposition}\label{prop:no_rec-1} Let $\mathcal K$ be a non-empty class of finitely presented groups with solvable isomorphism problem. Then $\K$ is not recursive.
\end{proposition}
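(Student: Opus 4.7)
My plan is to argue by contradiction. Suppose $\mathcal{K}$ is recursive, and fix a finite presentation $P_0$ of some group $G_0 \in \mathcal{K}$, which exists by non-emptiness. The first step is to show that, under these hypotheses, for \emph{every} finite presentation $P$ of a group (not just those in $\mathcal{K}$), one can decide whether $P$ defines a group isomorphic to $G_0$. Indeed, given such a $P$: (i) by recursiveness of $\mathcal{K}$, one decides whether $P$ defines a group in $\mathcal{K}$; (ii) if not, then $P$ cannot present $G_0$, since $\mathcal{K}$ is closed under isomorphism and contains $G_0$; (iii) if so, the solvable isomorphism problem within $\mathcal{K}$ compares $P$ with $P_0$ to complete the decision.

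Next, I would reduce the undecidable \emph{triviality problem} for finitely presented groups (the classical theorem of Adian and Rabin) to this isomorphism-with-$G_0$ problem. Given any finite presentation $P = \langle X \mid R\rangle$ of a group $Q$, one forms the obvious finite presentation of the free product $Q * G_0$ by taking the disjoint union of the generators of $P$ and $P_0$ and of their relators. Grushko's theorem gives $\rank(Q * G_0) = \rank(Q) + \rank(G_0)$ for finitely generated groups, and since $\rank$ is an isomorphism invariant, $Q * G_0 \cong G_0$ if and only if $\rank(Q) = 0$, i.e., if and only if $Q$ is trivial. Thus any algorithm deciding isomorphism with $G_0$ would decide triviality, contradicting Adian--Rabin.

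The main point of care is ensuring the rank argument applies regardless of the chosen $G_0 \in \mathcal{K}$. This is no real obstacle: $G_0$ is finitely presented, hence finitely generated, so Grushko's theorem applies directly. The case $G_0 = \{1\}$ is particularly clean, as then $Q * G_0 = Q$ and isomorphism-with-$G_0$ is literally the triviality problem. In all cases the reduction yields the required contradiction, so $\mathcal{K}$ cannot be recursive.
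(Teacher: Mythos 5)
Your proof is correct and follows essentially the same route as the paper: fix a presentation of some $G_0\in\mathcal K$, use recursiveness plus the isomorphism algorithm to decide whether $Q*G_0\cong G_0$, apply the Grushko--Neumann rank formula to conclude this holds iff $Q$ is trivial, and contradict Adian--Rabin. No gaps.
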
 

\begin{proof} Arguing by contradiction, suppose that $\K$ is a non-empty recursive class of finitely presented groups such that the isomorphism problem is solvable within $\K$.
Since $\K$ is non-empty, we can assume that we possess a finite presentation of some group $G \in K$. Let $A$ be an arbitrary finitely presented group and set $H:=G*A$. 
Then $H$ also finitely presented
and a finite presentation  of $H$ can be easily obtained from the finite presentations of $G$ and $A$. Now, by the assumptions, we can decide whether or not $H$ 
belongs to $\K$. If $H \notin \K$, then $H$ is not isomorphic to $G$, hence the group $A$ is non-trivial. If $H \in \K$ then one can apply the algorithm solving the isomorphism
problem in $\K$ to decide whether or not $H \cong G$. Evidently, if $H$ is not isomorphic to $G$ then $A$ is non-trivial. On the other hand, if $H \cong G$ then 
$\rank(H)=\rank(G*A)=\rank(G)+\rank(A)=\rank(G)$ by Grushko-Neumann theorem (see \cite[IV.1.9]{L-S}), where $\rank(G)$ denotes the minimal number of elements required to 
generate the group $G$. Hence $\rank(A)=0$, i.e., $A$ is the trivial group. Thus we have described an algorithm which decides whether any given finitely presented group $A$
is trivial. But it is well-known that the triviality problem is unsolvable within the class of all finitely presented groups (see, for example, \cite[Thm. 2.2]{rabin}).
This contradiction proves the claim of the proposition.
\end{proof}

Now we state a similar fact for the class of groups appearing in Theorem \ref{thm:main1}.
\begin{proposition}\label{prop:no_rec-2} Let $\mathcal K$ be a non-empty class of finitely presented groups with solvable commensurability problem. Then $\K$ is not recursive.
\end{proposition}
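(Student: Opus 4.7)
The plan is to adapt the argument of Proposition~\ref{prop:no_rec-1} to the commensurability setting. Suppose, for contradiction, that $\K$ is non-empty, recursive, and that the commensurability problem is solvable within $\K$. Fix some $G \in \K$ together with a finite presentation of it. For an arbitrary finitely presented group $A$, form an auxiliary finitely presented group $H = H(A)$ whose presentation is effectively computable from those of $G$ and $A$; decide whether $H \in \K$ using the recursiveness of $\K$, and, in that case, decide whether $H$ is commensurable with $G$ using solvability of commensurability within $\K$. The goal is to read off an undecidable property of $A$ (such as triviality) from these two answers, contradicting Rabin's theorem.

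The first natural attempt is $H := G * A$, exactly as in the proof of Proposition~\ref{prop:no_rec-1}. If $H \notin \K$, then $H \not\cong G$, and hence $A \ne \{1\}$. If $H \in \K$ but $H$ is not commensurable with $G$, then in particular $H \not\cong G$, so again $A \ne \{1\}$. These two cases mirror the isomorphism argument.

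The main obstacle lies in the remaining case, when $H \in \K$ and $H$ is commensurable with $G$. For the isomorphism problem, Grushko--Neumann immediately forces $\rank(A) = 0$ and hence $A = \{1\}$. For commensurability this conclusion can fail badly: for instance, $F_n * F_m \cong F_{n+m}$ is commensurable with $F_n$ whenever $n \geqslant 2$, so $G * A$ can be commensurable with $G$ even when $A$ is non-trivial. To overcome this I would replace the free product by a construction whose commensurability class relative to $G$ encodes an undecidable invariant of $A$. One concrete option is to use the direct product $H(A) := G \times A$ together with a commensurability invariant (for example virtual cohomological dimension, number of ends, or growth type) forcing $H(A) \sim G$ to be equivalent to $A$ being finite; since finiteness of finitely presented groups is itself undecidable, this would yield the required contradiction. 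An alternative is to apply Rabin's theorem directly to the decidable property ``lies in $\K$ and is commensurable with $G$'', by exhibiting a finitely presented group $G_-$ that embeds into no finitely presented group commensurable with $G$.

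The hardest step in either variant is to make the argument uniform in the given $G \in \K$, which a priori may be an arbitrary finitely presented group: neither the desired commensurability invariant nor an embedding obstruction $G_-$ is automatically available without further information on the subgroup structure of $G$'s commensurability class. This is where the analogy with Proposition~\ref{prop:no_rec-1} genuinely breaks down and where commensurability-specific input must be supplied.
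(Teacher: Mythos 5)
You have correctly diagnosed the obstruction --- $G*A$ can be commensurable with $G$ for non-trivial $A$ (your free-group example is exactly right) --- but your proposal stops short of resolving it. Both of your suggested fixes (a commensurability invariant distinguishing $G\times A$ from $G$, or an embedding obstruction $G_-$) are left as unexecuted ideas, and as you yourself note, neither is available uniformly in an arbitrary $G\in\K$ about which we know nothing. Since the whole point is that $\K$ and $G$ are arbitrary, this is a genuine gap, not a routine verification.

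The missing idea is to use \emph{two} test groups simultaneously and play them against each other, rather than seeking a single invariant. The paper forms both $H:=G*(A*A)$ and $F:=G\times(A*A)$, decides membership in $\K$ and commensurability with $G$ for each, and then invokes the following lemma: if $H$ and $F$ are \emph{both} commensurable to $G$, then $A$ is trivial. The proof of the lemma is where the commensurability-specific input you were looking for appears, and it needs nothing about $G$ beyond whether it is finite or infinite. If $G$ is finite, then $F$ commensurable to $G$ forces $A*A$ finite, hence $A=\{1\}$. If $G$ is infinite and $A\neq\{1\}$, then every finite-index subgroup of $F$ contains a finite-index subgroup splitting as a non-trivial direct product, whereas by the Kurosh subgroup theorem every finite-index subgroup of $H$ is a non-trivial free product and hence admits no non-trivial direct product decomposition; so $H$ and $F$ cannot be commensurable to each other, contradicting transitivity of commensurability if both were commensurable to $G$. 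This bypasses the need for any numerical invariant or any uniformity assumption on $G$, which is precisely the step where your argument breaks down.
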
 

The proof of this statement will utilize the following lemma:
\begin{lemma}\label{lem:comm-aux} Let $G$ and $A$ be groups and let $H:=G*(A*A)$, $F:=G \times (A*A)$. Suppose that $H$ and $F$ are both commensurable to $G$. Then
$A$ is the trivial group.
\end{lemma}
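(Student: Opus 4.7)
The plan is to argue by contradiction, assuming $A\neq\{1\}$ and extracting an incompatible pair of end-theoretic invariants for $G$. I will work in the setting where $G$ and $A$, and hence $L:=A*A$, $H$, and $F$, are all finitely generated; this is the relevant regime for the intended application inside Proposition~\ref{prop:no_rec-2}, where the ambient groups are finitely presented.

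The first step is to dispose of degenerate cases. If $A\neq\{1\}$, then $L=A*A$ is infinite: either $A$ itself is infinite, or $A$ is non-trivial finite and $L$ is a non-trivial free product of two non-trivial finite groups, hence infinite (virtually $\Z$ in the $C_2*C_2$ case, and containing a non-abelian free subgroup of finite index otherwise). Next, $G$ must be infinite, because if $G$ were finite then $H=G*L$ would contain the infinite subgroup $L$, hence be infinite, and so could not be commensurable to the finite group $G$.

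The second step is to count ends, using that the number of ends is a commensurability invariant for finitely generated groups. Since $F=G\times L$ is a direct product of two infinite finitely generated groups, it is one-ended, and so $G$ is one-ended. On the other hand, $H=G*L$ is a non-trivial free product of two non-trivial groups, so by Stallings' theorem it has at least two ends; it cannot be two-ended, because the only two-ended non-trivial free product is $C_2*C_2=D_\infty$, which is excluded here since both factors $G$ and $L$ are infinite. Hence $H$ has infinitely many ends, and, by commensurability, so does $G$. This contradicts the previous paragraph, and therefore $A=\{1\}$.

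I do not anticipate a serious obstacle in this approach. The one point to be careful about is the finite-generation hypothesis, which is needed both to give the number of ends meaning and to invoke Stallings' theorem. A minor bookkeeping task is the exclusion of the infinite dihedral case among two-ended free products, which is where we use that $G$ (not merely $L$) is infinite.
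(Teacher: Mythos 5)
Your proof is correct, but it takes a genuinely different route from the paper's. The paper argues purely algebraically: by Proposition-free use of the Kurosh subgroup theorem, every finite index subgroup of $H=G*(A*A)$ is a non-trivial free product, whereas every finite index subgroup of $F=G\times(A*A)$ contains a finite index subgroup that is a non-trivial direct product, and a non-trivial free product is never isomorphic to a non-trivial direct product; hence $H$ and $F$ cannot be commensurable. You instead use the number of ends as a commensurability invariant: $F$ forces $G$ to be one-ended while $H$ forces $G$ to be infinite-ended. Your argument is arguably slicker (it avoids inspecting the structure of finite index subgroups altogether, needing only Stallings' classification and the standard fact that a product of two infinite groups is one-ended), but it requires all groups involved to be finitely generated, a hypothesis not present in the lemma as stated; the paper's subgroup-theoretic argument works for arbitrary groups $G$ and $A$. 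As you correctly observe, the restriction is harmless for the intended application in Proposition~\ref{prop:no_rec-2}, where everything is finitely presented. Your handling of the degenerate cases (infinitude of $A*A$, infinitude of $G$, exclusion of the two-ended dihedral case) is accurate.
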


\begin{proof} First suppose that $G$ is finite. Since $F$ is commensurable to $G$, it must also be finite, hence $A*A$ is finite, 
which can only happen if $A$ is trivial. 

Assume, now, that $G$ is infinite. If $A$ is non-trivial then $A*A$ is infinite,
hence any finite index subgroup of $F$ itself contains a finite index subgroup that
decomposes as a non-trivial direct product (of a finite index subgroup in $G$ with a finite index subgroup in $A*A$). While any
finite index subgroup of $H$ decomposes in a non-trivial free product (by Kurosh theorem \cite[IV.1.10]{L-S}), and hence it cannot be isomorphic to a
non-trivial direct product (see, for example, \cite[Observation, p. 177]{L-S}). Therefore $F$ cannot be commensurable to $H$; this contradicts with the assumption that they
are both commensurable to $G$ and the fact that commensurability is a transitive relation. Thus $A$ must be trivial.
\end{proof}

\begin{proof}[Proof of Proposition \ref{prop:no_rec-2}] 
As before, assume that there is a recursive class $\K$ with solvable commensurability problem, and let $G \in \K$ be some group with a given finite presentation. 
Suppose we are given a finite presentation of any group $A$. Then we can easily produce the finite presentations for the groups $H:=G*A*A$ and $F:=G \times (A*A)$.

By the assumptions, we can decide whether or not $H \in \K$ and $F \in \K$. If at least one of these groups does not belong to $\K$, then $A$ is non-trivial. So, assume that 
both $H$ and $F$ lie in $\K$. Since the commensurability problem is solvable within $\K$, we can decide if $H$ and $F$ are commensurable to $G$. Evidently, if at least one 
of these groups is not commensurable to $G$ then $A$ is non-trivial. So, we can further suppose that both $H$ and $F$ are commensurable to $G$.
And Lemma \ref{lem:comm-aux} shows that the latter can happen only if $A$ is trivial. Thus, again, we produced an algorithm deciding the triviality of $A$, which leads us to the 
required contradiction.
\end{proof}

\section{Decision problems in geometric group theory}

From the viewpoint of geometric group theory, besides the isomorphism and commensurability relations,
there are several
other equivalence relations on groups which are of natural interest:\medskip

$\bullet$ Two finitely generated groups $G_1,G_2$ are {\it virtually
isomorphic} (sometimes also called {\it commensurable up to finite
kernels}) if there exist a pair of finite index subgroups $H_i
\leqslant G_i$, and some further finite normal subgroups
$N_i\trianglelefteq H_i$, $i=1,2$, with isomorphic quotients $H_1/N_1 \cong
H_2/N_2$.\medskip

$\bullet$ Two finitely generated groups $G_1,G_2$ are {\it
quasi-isometric} if there exists a map ${f: G_1\rightarrow G_2}$ and a
constant $K>0$ so that for all $x,y\in G_1$
$$\frac{1}{K} d_1(x,y) - K  \leqslant d_2\big(f(x), f(y)\big) \leqslant K \cdot d_1(x,y) + K  $$
and the $K$-neighborhood of $f(G_1)$ is all of $G_2$ (the $d_i$ are word metrics on the $G_i$, $i=1,2$).\medskip

$\bullet$ Two finitely generated groups are {\it bi-Lipschitz equivalent}
if there is a bi-Lipschitz map between $(G_1, d_1)$ and $(G_2,d_2)$, where
again the $d_i$ are word metrics (this is equivalent to the existence of a bijective
quasi-isometry between them - see Whyte \cite{whyte}).\medskip

We can now state the corresponding decision problems: the {\it
virtual isomorphism problem} (respectively, {\it quasi-isometry
problem} or {\it bi-Lipschitz problem}) asks whether there exists an
algorithm which, given two finite presentations of groups, can decide
whether or not they define virtually isomorphic (resp. quasi-isometric
or bi-Lipschitz equivalent) groups. Several of these problems
have been studied from the viewpoint of descriptive set theory by
Thomas~\cite{thomas2, thomas3, thomas1, thomas-velickovic}. Note
that a group is bi-Lipschitz equivalent to the trivial group if and
only it is trivial, and that it is virtually isomorphic,
commensurable, or quasi-isometric to the trivial group if and only
if it is finite. Since the problem of deciding whether a finitely
presented group is finite (or trivial) is unsolvable (this follows
from the famous Adian-Rabin theorem, see~\cite{adian1,adian2} and
\cite{rabin}), we immediately obtain

\begin{lemma}
Within the class of all finite presentations of groups, the virtual isomorphism, quasi-isometry, bi-Lipschitz,
and commensurability problems are all unsolvable.
\end{lemma}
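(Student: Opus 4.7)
The plan is to argue by reduction from the triviality and finiteness problems for finitely presented groups, both of which are known to be unsolvable by the Adian--Rabin theorem. The key point is already isolated in the paragraph immediately preceding the lemma: the trivial group serves as a ``test object'' whose equivalence class, under each of the four relations, coincides with either the class of trivial groups or the class of finite groups.

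More precisely, I would proceed as follows. Suppose, for the sake of contradiction, that one of the four problems were solvable, and let $G$ be an arbitrary finitely presented group given by a finite presentation $P$. Fix a finite presentation $P_0$ of the trivial group (say $\langle \, x \, \| \, x \, \rangle$). Then we could feed the pair $(P,P_0)$ into the assumed decision algorithm, obtaining an answer to whether $G$ is equivalent to the trivial group under the relation in question. For the bi-Lipschitz problem, the observation that bi-Lipschitz equivalence with the trivial group characterises triviality would then give a decision procedure for the triviality problem, contradicting the Adian--Rabin theorem. For each of the virtual isomorphism, quasi-isometry, and commensurability problems, the corresponding observation that equivalence with the trivial group characterises finiteness would yield a decision procedure for the finiteness problem, again contradicting Adian--Rabin.

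Since these reductions are completely routine once the preceding paragraph is in hand, there is no substantive obstacle to overcome; the proof amounts to assembling the two ingredients (the characterisations of the trivial/finite group inside each equivalence relation, and the Adian--Rabin undecidability results) and pointing out that the hypothetical algorithm would decide a Markov property of finitely presented groups. I would therefore write the proof as a single short paragraph handling all four cases in parallel, treating bi-Lipschitz equivalence separately only to note that it reduces to triviality rather than finiteness.
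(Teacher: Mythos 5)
Your proposal is correct and follows exactly the paper's intended argument: the lemma is stated as an immediate consequence of the observations in the preceding paragraph (bi-Lipschitz equivalence to the trivial group characterises triviality, while the other three relations characterise finiteness), combined with the Adian--Rabin unsolvability of the triviality and finiteness problems. No substantive difference from the paper's reasoning.
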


It would be of some interest to study the relative complexity of these various decision problems.
A straightforward consequence of our construction appearing in
the proof of Theorem~\ref{thm:main1} is the following:

\begin{corollary}
There exists a recursively enumerable class of finite presentations of groups within which the isomorphism
problem is unsolvable, but the virtual isomorphism, quasi-isometry, and bi-Lipschitz
problems are all (trivially) solvable.
\end{corollary}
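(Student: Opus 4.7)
The plan is to simply re-use the class $\mathcal{C}_2$ constructed in the proof of Theorem~\ref{thm:main1}, and verify that each of the three equivalence relations in question is strictly coarser than commensurability for finitely generated groups, so that they all collapse to a single equivalence class on $\mathcal{C}_2$.

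First I would recall the key feature of the construction: every presentation $P_{2,r}\in\mathcal{C}_2$ defines a group $G_{\tau_{d_r}}$, and by the argument in the proof of Theorem~\ref{thm:main1} (invoking Corollary~\ref{cor:main_aux} together with the fact that each $d_r$ has order at most $2$ in $A_2$), any two such groups are commensurable. Thus all presentations in $\mathcal{C}_2$ define pairwise commensurable groups. I would then point out that the isomorphism problem within this very class is already known to be unsolvable by Theorem~\ref{thm:main1}, so nothing new needs to be done on that side.

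Next I would argue that on the class of finitely generated groups, commensurability implies each of the three equivalences. For virtual isomorphism this is immediate from the definition: given commensurable $G_1,G_2$ with isomorphic finite-index subgroups $H_1\cong H_2$, one simply takes the trivial normal subgroups $N_i=\{1\}$. For quasi-isometry, a finite-index subgroup of a finitely generated group is quasi-isometric to the ambient group (the inclusion is a quasi-isometry), so commensurable groups are quasi-isometric. For the bi-Lipschitz case, one uses the classical fact that a finite-index subgroup $H\leqslant G$ is bi-Lipschitz equivalent to $G$ with respect to word metrics (choose coset representatives and note that the resulting bijection distorts distances by a bounded factor); combined with transitivity this gives bi-Lipschitz equivalence of commensurable groups.

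Putting these two observations together, all groups defined by presentations in $\mathcal{C}_2$ are pairwise virtually isomorphic, pairwise quasi-isometric, and pairwise bi-Lipschitz equivalent. Hence for each of these three problems, the algorithm that always answers \emph{yes} correctly decides the problem within $\mathcal{C}_2$, while Theorem~\ref{thm:main1} provides the unsolvability of the isomorphism problem in the same class. I do not anticipate any real obstacle here: the whole argument is essentially a bookkeeping observation on top of Theorem~\ref{thm:main1}, and the only substantive point to cite carefully is that commensurable finitely generated groups are bi-Lipschitz equivalent, for which the standard coset-representative argument (or Whyte's characterization mentioned in the excerpt) suffices.
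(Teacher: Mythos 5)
Your overall strategy is the same as the paper's: reuse $\mathcal{C}_2$, observe that all its groups are pairwise commensurable, and argue that each of the three coarser relations therefore becomes trivial. The virtual isomorphism step (take $N_i=\{1\}$) and the quasi-isometry step (the inclusion of a finite-index subgroup is a quasi-isometry) are correct and, for virtual isomorphism, even a little more direct than the paper's argument, which establishes the unnecessary converse implication via the absence of finite normal subgroups.

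However, the bi-Lipschitz step contains a genuine gap. It is \emph{not} a classical fact that a finite-index subgroup $H\leqslant G$ is bi-Lipschitz equivalent to $G$, and the coset-representative argument does not produce the required map: choosing coset representatives gives a natural bijection between $G$ and $H\times\{1,\dots,n\}$, not between $G$ and $H$, and a bi-Lipschitz equivalence must be a genuine bijection. Whether the quasi-isometry $H\hookrightarrow G$ is at bounded distance from a bijection is governed precisely by Whyte's criterion (the vanishing of the fundamental class in $0$-dimensional uniformly finite homology); this holds automatically for non-amenable groups by Block--Weinberger, but can fail for amenable ones --- Dymarz's examples show that quasi-isometric (even commensurable-type) finitely generated amenable groups need not be bi-Lipschitz equivalent. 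So ``commensurable $\Rightarrow$ bi-Lipschitz equivalent'' is false in general, and your parenthetical fallback to ``Whyte's characterization'' is not a free substitute: to invoke it you must first check a hypothesis. The paper does exactly this: it observes that $G=A_2*B$ is a non-elementary free product, hence non-amenable, hence so is every $G_{\tau_{d_r}}$; Block--Weinberger then gives vanishing of $H_0^{uf}$, and Whyte's theorem upgrades the quasi-isometries to bi-Lipschitz equivalences. Your argument becomes correct once you insert this non-amenability verification, but as written the bi-Lipschitz claim rests on a false general statement.
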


\begin{proof} In the notations from the proof of theorem \ref{thm:main1}, let $\Phi:=\left\{\tau_{d_r} \mid r \in \N\cup \{0\}\right\} \subset {\rm Aut(G)}$.
Then the class of finite presentations $\mathcal{C}_2$, constructed in the proof of Theorem~\ref{thm:main1}, defines the class
$\mathcal{K}_{G,\Phi}=\left\{G_{\tau_{d_r}} \mid r \in\N\cup\{0\}\right\}$ of finitely presented groups.
As we noticed above, any two groups from this class are commensurable. And since commensurable groups are automatically quasi-isometric, all
the groups in $\mathcal{K}_{G,\Phi}$ are quasi-isometric to each other, and the quasi-isometry problem within $\mathcal{C}_2$ is
(trivially) solvable.

Moreover, none of the groups $G_{\tau_{d_r}}$, $r \in \N\cup\{0\}$, can contain a non-trivial finite normal subgroup,
for such a subgroup would have to map to the identity under the
canonical projection $G_{\tau_{d_r}} \twoheadrightarrow \mathbb Z$, and
hence it would have to be a normal subgroup in the group $G=A_2*B$.
But a non-trivial free product does not have any non-trivial finite normal subgroups.
This tells us that within the class  $\mathcal{K}_{G,\Phi}$, two groups are
virtually isomorphic if and only if they are commensurable. Therefore
the virtual isomorphism problem within $\mathcal{C}_2$ is also (trivially) solvable.

Finally, noting that  $G=A_2*B$ is non-amenable, as a non-elementary free product, and embeds into every
$G_{\tau_{d_r}}$, $r \in\N\cup\{0\}$, we see that all the groups in the class
$\mathcal{K}_{G,\Phi}$ are non-amenable. The work of Block and Weinberger
\cite[Thm. 3.1]{block-weinberger} implies that the groups in this class
all have vanishing $0$-dimensional uniformly finite homology.
Whyte's thesis \cite[Thm. 1.1]{whyte} then implies that commensurability
between any two groups from $\mathcal{K}_{G,\Phi}$ can be promoted to a
bi-Lipschitz equivalence. We conclude that all the groups in $\mathcal{K}_{G,\Phi}$
are bi-Lipschitz equivalent to each other, so that
the bi-Lipschitz problem within $\mathcal{C}_2$ is also (trivially) solvable.
\end{proof}

More generally, we expect that these various decision problems are fundamentally unrelated
to each other (with the possible exception of the bi-Lipschitz problem, in view of Whyte's thesis \cite{whyte}).
To be more precise, we suspect that given any two disjoint subsets of
these decision problems, one can find a recursively enumerable class of finite presentations of groups such that
any problem from the first of these subsets is solvable within this class, while problems from the second subset are all unsolvable.

In another vein, these algorithmic problems are also open for various
natural classes of groups. For instance, one could focus on certain classes of lattices within a fixed
semi-simple Lie group $G$ of non-compact type. If the $\mathbb R$-rank of $G$ is $\geqslant 2$, and one
restricts to uniform lattices (so that the quasi-isometry
problem is trivially solvable), is the isomorphism problem or commensurability problem solvable?
If one focuses on $G=SO(n,1)$, $n\geqslant 4$, and restrict to non-uniform lattices (so that the isomorphism problem is
solvable, by Dahmani and Groves \cite{dahmani-groves}), is the commensurability problem (equivalent to the quasi-isometry
problem, by Schwartz \cite{schwartz}) solvable or not? Surprisingly, these questions do not seem to have been
considered in the literature.

\vskip 10pt

\centerline{\bf Acknowledgments}

\vskip 5pt

The authors thank Cornelia Dru\c tu, Daniel Groves, and
Mark Sapir for helpful conversations. The research of the first
author was partially supported by the ERC grant ANALYTIC no.
259527, and by the Swiss NSF, under Sinergia grant
CRSI22-130435. The second author was partially supported by the
NSF, under grant DMS-0906483, and by an Alfred P. Sloan research
fellowship. The work of the third author was supported by the EPSRC grant EP/H032428/1.

\bibliography{isom-7}
\bibliographystyle{plain}

\end{document}